\newcommand{\R}{{\mathbb{R}}}
\newcommand{\A}{{\mathcal{A}}}
\newcommand{\C}{{\mathcal{C}}}
\DeclareMathOperator{\Int}{Int}
\theoremstyle{plain}
\newtheorem{thm}{Theorem}[section]
\newtheorem{cor}[thm]{Corollary}
\newtheorem{lem}[thm]{Lemma}
\theoremstyle{definition}
\newtheorem{dfn}[thm]{Definition}
\begin{document}
\title{Stabilizing Heegaard splittings of high-distance knots}
\author{George Mossessian}
\maketitle
\begin{abstract}Suppose $K$ is a knot in $S^3$ with bridge number $n$ and bridge distance greater than $2n$. We show that there are at most ${2n\choose n}$ distinct minimal genus Heegaard splittings of $S^3\setminus\eta(K)$. These splittings can be divided into two families. Two splittings from the same family become equivalent after at most one stabilization. If $K$ has bridge distance at least $4n$, then two splittings from different families become equivalent only after $n-1$ stabilizations. Further, we construct representatives of the isotopy classes of the minimal tunnel systems for $K$ corresponding to these Heegaard surfaces. 
\end{abstract}
\section{Introduction}
In 1933, Reidemeister and Singer independently showed \cite{rei33, sin33} that any two Heegaard splittings of a 3-manifold become equivalent after some finite number of stabilizations to each. No upper bound on the genus of this common stabilization was known until 1996, when Scharlemann and Rubinstein \cite{scharl96} showed that, for a non-Haken manifold, there is an upper bound which is linear in the genera of the two respective Heegaard splittings. Later, they extended these results to a quadratic bound in the case of a Haken manifold. In 2011 Johnson \cite{joh11stab} showed that, for any 3-manifold there a linear upper bound $3p/2+2q-1$, where $p\geq q$ are the genera for the Heegaard surfaces. Examples of Heegaard splittings which required more than one stabilization to the larger-genus surface were not known until Hass, Thompson and Thurston \cite{hass03} constructed examples with stable genus $p+q$ in 2009. Later that year, Bachman \cite{bach09} gave examples with stable genus $p+q-1$, and Johnson \cite{joh10} gave a combinatorial description of the geometric construction give by Hass-Thompson-Thurston.

One goal of this paper is to show that for most $n$-bridge knots, there are many pairs of genus-$n$ Heegaard splittings of the knot exterior which have stable genus $2n-1$. This is accomplished by means of the same machinery that Rubinstein and Scharlemann developed and Johnson refined for their respective upper bounds, the Rubinstein-Scharlemann graphic. 

It was not previous known whether a knot complement can have Heegaard surfaces which require more than one stabilization to become equivalent. Theorem \ref{mainthm} shows that such knots do exist. Though we do not prove it here, it is easy to see that the knots we consider are all hyperbolic. The 3-manifolds with stable genus $p+q$ constructed by Hass, Thompson and Thurston in \cite{hass03} are negatively curved, but not hyperbolic. Their methods can be used to construct a hyperbolic 3-manifold with two Heegaard surfaces with stable genus $p+q-4$, so Theorem \ref{mainthm} extends this bound, as well. The geometry of high-distance knot complements and tunnel systems is a topic for future investigation.

The starting point of this work is a theorem of Tomova which states that for a knot with sufficiently high distance, the minimal bridge sphere is unique up to isotopy. This allows us to give a combinatorial description of all of the minimal genus Heegaard splittings of the complement of such a knot. In turn, we use this description to come up with a list of all the tunnels belonging to any minimal tunnel system for such a knot. It turns out that this description precisely generalizes Kobayashi's \cite{kob99} classification of unknotting tunnels for 2-bridge knots. 

This work suggests some questions for further investigation. In particular, is it possible to extend this construction to closed manifolds? And, can a criterion be found for determining whether two Heegaard splittings of a knot complement with low-genus common stabilization are isotopic, as Morimoto and Sakuma \cite{mor91} did for 2-bridge knots? 

In section 2 we will define all the terms necessary for stating the main theorem. In section 3, we construct these Heegaard splittings. It follows from work of Tomova \cite{tom07} that there are no others. In section 4, we show how they divide into two families, and we show that two splittings from the same family become equivalent after one stabilization. Section 5 is devoted to recalling the machinery of Johnson developed in \cite{joh08} and \cite{joh10}, and proving that it can be used in this situation. In section 6 this machinery is used to place a lower bound on the stable genus of two splittings from different families. Finally, in section 7, we show how to construct a tunnel system which corresponds to one of the Heegaard splittings constructed in the proof of Theorem \ref{mainthm}, so that any minimal tunnel system for $K$ is equivalent to one of these tunnel systems.

I would like to thank Abby Thompson for many discussions and suggestions without which this work would not have been possible, and Jesse Johnson for the initial conversation which led to this work.

\section{Preliminaries}

Let $M$ be a closed, compact, connected, orientable 3-manifold. A \emph{Heegaard surface} for $M$ is a closed, orientable, connected surface $\Sigma$ embedded in $M$ so that $\overline{M\setminus \Sigma}$ is a pair of handlebodies $H_1$, $H_2$. The triple $(\Sigma, H_1, H_2)$ is a \emph{Heegaard splitting} of $M$. The minimal genus of all such surfaces $\Sigma$ is the \emph{Heegaard genus} of $M$. Two Heegaard splittings $(\Sigma, H_1, H_2)$ and $(\Sigma', H_1', H_2')$ are \emph{isotopic} or \emph{equivalent} if there is an ambient isotopy of $M$ taking $\Sigma$ to $\Sigma'$ and $H_i$ to $H_i'$. Otherwise, they are \emph{distinct}. For a manifold with boundary, there is a natural generalization of this idea, discussed below.

Given a handlebody $H$, let $\Gamma \subset H$ be a graph which is a deformation retract of $H$. Then $\Gamma$ is a \emph{spine} of $H$. Removing a regular $N\subset \Int H$ neighborhood of some (possibly empty, possibly disconnected) subgraph of $\Gamma$ contained in $H$ results in a \emph{compression body}. However, we assume that no component of the boundary of this regular neighborhood is spherical. The spine of this compression body is the remainder of $\Gamma$, union the boundary of the regular neighborhood which was removed. If $M$ is a compact orientable manifold with boundary, then $\partial M$ is a closed orientable surface, and thus a Heegaard splitting of $M$ can be defined as a triple $(\Sigma, W_1, W_2)$ where $W_i$ are compression bodies. 

If $K\subset M$ is a knot, a \emph{bridge surface} for $K$ is a Heegaard surface $\Sigma$ for $M$ such that $K$ intersects $\Sigma$ transversely and $K\cap H^\pm$ is a boundary parallel collection of arcs in each handlebody, called \emph{bridge arcs}, or $K\cap\Sigma=\emptyset$ and $K$ is parallel into $\Sigma$. Equivalently, $K\cap H^\pm$ is a collection of arcs $\gamma_i$ such that there is a collection of arcs $(\alpha_i,\partial\alpha_i)\subset(\Sigma,\partial\gamma_i)$ which, together with the $\gamma_i$, cobound disks $D_i$ which, within each handlebody, are pairwise disjoint. These are called {\emph bridge disks}. If the genus of $\Sigma$ is $g$ and the number of points of $\Sigma\cap K$ is $2n$, then $\Sigma$ is a $(g,n)$ bridge surface. If $n$ is minimal over all surfaces of some fixed genus $g$, then $K$ is said to be a $(g,n)$-bridge knot. If $g=0$, this is reduced to simply saying that $K$ is $n$-bridge, and $\Sigma$ is a \emph{bridge sphere}. 

If a simple closed curve $\gamma$ on a Heegaard or bridge surface $\Sigma$ does not bound a disk in $\Sigma$ or a disk punctured once by $K\cap \Sigma$ in $\Sigma$, we say that $\gamma$ is \emph{essential}. We define the \emph{curve complex} $C(\Sigma)$, first introduced by Hempel in \cite{hem01}, of the surface $\Sigma$ as follows: let vertices correspond to isotopy classes of essential simple closed curves on $\Sigma$, and let any two vertices corresponding to curves which have disjoint representatives on $\Sigma$ be connected by an edge. To define the distance of the splitting $(\Sigma, H_1,H_2)$, let $K_1$ and $K_2$ be the collections of vertices corresponding to curves that bound disks to either side of $\Sigma$, which are disjoint from $K$ in the case of a bridge splitting. Then the minimal length edge path in $C(\Sigma)$ between a vertex of $K_1$ and a vertex of $K_2$ is called the \emph{distance} of $\Sigma$, and is denoted $d(\Sigma)$. If $\Sigma$ is a bridge sphere for a knot $K\subset S^3$ which realizes the minimal distance of all bridge spheres for $K$, then we say that the knot has distance $d(\Sigma)$, which is denoted as $d(K)$. 

Let $(\Sigma, W_1, W_2)$ be a genus-$g$ Heegaard splitting of $M$. If $\gamma$ is a properly embedded arc in, say, $W_2$, which is parallel into $\Sigma$, let $W'_1=W_1\cup N(\gamma)$, and $W'_2=\overline{W_2\setminus N(\gamma)}$, with $\Sigma'=\partial W'_2=\partial W'_1$. Then $W'_2$ and $W'_1$ are still compression bodies, and $(\Sigma', W'_1, W'_2)$ is a genus $g+1$ Heegaard splitting for $M$ which is a \emph{stabilization} of $(\Sigma, W_1, W_2)$. If $\Sigma$ was a $(g,n)$ bridge surface for a knot $K$, and $\gamma$ a bridge arc, then $\Sigma'$ is a \emph{meridional stabilization} of $\Sigma$ and is a $(g+1,n-1)$-bridge surface. Reidemeister \cite{rei33} and Singer \cite{sin33} showed independently that any two inequivalent Heegaard surfaces for $M$ have stabilizations which are equivalent. The minimal genus of this common stabilization is called the \emph{stable genus} of the two surfaces. 

Let $X_K=S^3\setminus\eta(K)$. In this paper, we classify the minimal genus Heegaard splittings and their common stabilizations for $X_K$ when $K$ is a high-distance knot:

\begin{thm}Let $K\subset S^3$ be an $n$-bridge knot with $n\geq 3$ and bridge distance $d>2n$. Then the Heegaard genus of $X_K$ is $n$, and there are at most ${2n\choose n}$ distinct minimal genus Heegaard splittings of $X_K$. If two Heegaard surfaces have $K$ on the same side, then they have a stable genus of at most $n+1$; If two Heegaard surfaces have $K$ on opposite sides, then their stable genus is at least $\min\left(2n-1,\dfrac{1}{2}d\right)$. \label{mainthm}
\end{thm}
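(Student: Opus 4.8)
The proof naturally splits into four assertions, and I would organize the argument accordingly.

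\textbf{Plan for the proof.} The first claim, that the Heegaard genus of $X_K$ equals $n$, follows by combining two observations. On one hand, any $n$-bridge presentation of $K$ with respect to a bridge sphere $S$ yields a genus-$n$ Heegaard surface of $X_K$: tube the $2n$ punctures of $S\setminus\eta(K)$ along the $n$ bridge arcs on one side, producing a closed genus-$n$ surface that bounds a handlebody (from the tubed side) and a compression body (from the other). On the other hand, a Heegaard surface of genus less than $n$ for $X_K$ would, after capping off with a meridian disk of $\eta(K)$ and applying Haken/Casson-Gordon style arguments, produce a bridge sphere for $K$ of bridge number less than $n$, contradicting minimality; alternatively one can cite that the tunnel number of $K$ is $n-1$, which is forced by the high distance hypothesis via Tomova's theorem and the work of Scharlemann-Tomova relating distance to the genus of competing Heegaard/bridge surfaces. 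The counting of at most $\binom{2n}{n}$ splittings will come from section 3 and 4: by Tomova's uniqueness of the minimal bridge sphere $S$, every genus-$n$ Heegaard surface of $X_K$ is obtained (up to isotopy) by meridionally stabilizing $S$, i.e. by a choice of how to tube; the $n$ bridge arcs on a given side can each be tubed, and there are two sides, so the combinatorial data is a selection that is counted by $\binom{2n}{n}$. Here one must be careful to argue, using Tomova's theorem together with a thin-position/untelescoping argument, that no genus-$n$ Heegaard surface is \emph{irreducible} in a way that evades this description — this is where section 3's construction and the appeal to \cite{tom07} do the work.

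\textbf{The same-side bound.} For two Heegaard surfaces $\Sigma_1,\Sigma_2$ with $K$ on the same side, the claim is stable genus $\le n+1$. The idea is that both are obtained from the unique bridge sphere $S$ by tubing along bridge arcs on the \emph{same} side; the two tubing patterns differ by a reconfiguration of bridge arcs within a single handlebody. Since any two complete systems of bridge arcs for $K$ in a handlebody (with respect to $S$) are related by slides and isotopies, and a single stabilization gives enough room to slide one tube past another, one stabilization of each should suffice to make the two surfaces isotopic. Concretely: after one meridional destabilization they both become the bridge sphere $S$; so after one stabilization they become isotopic — but one must phrase this carefully in terms of ordinary (not meridional) stabilizations of Heegaard surfaces of $X_K$, and check that meridional destabilization to $S$ followed by meridional stabilization is realized by a single ordinary stabilization. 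This bookkeeping is section 4.

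\textbf{The opposite-side lower bound.} This is the heart of the paper and the main obstacle. Given $\Sigma_1$ with $K$ below and $\Sigma_2$ with $K$ above, suppose they have a common stabilization of genus $g$. The plan is to run Johnson's refinement of the Rubinstein-Scharlemann graphic (section 5) for the sweepouts associated to $\Sigma_1$ and $\Sigma_2$: one gets a graphic in the square, and since neither surface can be isotoped to intersect the other only in inessential curves (the distance $d>2n$ obstructs low-genus common subsurfaces), there must be an unlabelled region or a path in the graphic forcing a spanning argument. The key inequality will bound $d(S)$, equivalently $d>2n$, from above by something like $2(g-n)+ (\text{correction})$, or rather one shows that if $g\le 2n-2$ and $g\le \tfrac12 d - 1$ then the graphic yields a pair of disjoint essential curves on a common surface that are distance-bounded, contradicting $d>2n$. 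So I expect the argument to show: a common stabilization of genus $g$ forces $d(\Sigma)\le 2(g-n)+2$ or similar, hence $g\ge \tfrac12 d + n -1 \ge \min(2n-1,\tfrac12 d)$ once $d>2n$. The hard part is verifying that Johnson's machinery applies — that the relevant sweepouts are in general position, that the high-distance hypothesis rules out the degenerate cases in the graphic (spanning regions, etc.), and that one correctly converts a "no unlabelled region" conclusion into the distance bound. Managing the case analysis over the structure of the graphic, and ensuring the bound is exactly $\min(2n-1,\tfrac12 d)$ rather than something weaker, is the real content of sections 5 and 6.
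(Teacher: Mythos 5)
Your high-level architecture matches the paper's (Tomova's theorem forces every minimal genus splitting to be a tubing of the unique bridge sphere; a combinatorial count; same-side pairs need one stabilization; opposite-side pairs are handled by the Rubinstein--Scharlemann/Johnson graphic), but two of the four parts have genuine gaps. For the same-side bound, your concrete mechanism is wrong: a genus-$n$ tubed surface does not become the bridge sphere $S$ after \emph{one} meridional destabilization (it takes $n$ of them), and the route ``destabilize to $S$, then restabilize'' gives no control at genus $n+1$. The paper's actual mechanism is an \emph{annulus compression}: replacing one tube by another is realized as a single stabilization followed by a single destabilization, and since the stabilization of a fixed splitting is unique, all the intermediate genus-$(n+1)$ surfaces in a chain of such moves coincide, yielding a common stabilization of genus $n+1$. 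One also needs the (nontrivial, parity-based) fact that any two same-side tubings are connected by such a chain. Your appeal to ``slides of bridge arc systems'' gestures at this but does not supply it.

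For the opposite-side bound, you set up the graphic between sweep-outs of $\Sigma_1$ and $\Sigma_2$, but the distance hypothesis is about the bridge sphere $S$, so the paper instead compares the \emph{common stabilization} $\Sigma''$ against $S$. The missing idea is that the spanning sign of the graphic of $\Sigma$ versus $S$ is determined by which side of $\Sigma$ the knot lies on, and is preserved under stabilization; hence $\Sigma''$ admits one sweep-out spanning $S$ positively (from $\Sigma_1$) and one spanning negatively (from $\Sigma_2$), and any isotopy between them must pass through a graphic that either spans with both signs or splits. The dichotomy then gives the bound: spanning both ways lets one compress $\Sigma''$ down to two parallel copies of the $2n$-punctured sphere, so $\chi(\Sigma'')\le 2\chi(S)$ and $g(\Sigma'')\ge 2n-1$; splitting gives $d(S)\le 2-\chi(\Sigma'')=2g$, so $g\ge d/2$. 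Your second formulation (``if $g\le 2n-2$ and $g\le \tfrac12 d-1$ then contradiction'') has the right logical shape, but your first (``$g\ge \tfrac12 d+n-1$'') is not the correct inequality, and without the spanning-sign invariance and the isotopy lemma controlling how the graphic degenerates, there is no argument that the common stabilization is forced into either horn of the dichotomy. The genus and counting parts are essentially fine, though the paper's count is cleaner: the combinatorial datum is the $n$-element set of ``left feet'' of the tubes among the $2n$ punctures, and every such set is realized.
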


\begin{cor}Let $K\subset S^3$ be an $n$-bridge knot with $n\geq 3$ and bridge distance $d>2n$. Then the tunnel number of $K$ is $n-1$, and there are at most ${2n\choose n}$ distinct tunnel systems for $K$, up to isotopy and edge slides.\label{cor-tunnels}\end{cor}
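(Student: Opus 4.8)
The plan is to derive the corollary from Theorem~\ref{mainthm} through the standard correspondence between tunnel systems for $K$ and Heegaard splittings of $X_K$. Recall that a system of $t$ tunnels for $K$ is a collection of disjoint arcs $\tau_1,\dots,\tau_t$ properly embedded in $X_K$, with endpoints on $\partial X_K=\partial\eta(K)$, such that $W_2=\overline{X_K\setminus\eta(\tau_1\cup\dots\cup\tau_t)}$ is a handlebody. The first step is to observe that such a system determines a genus-$(t+1)$ Heegaard splitting $(\Sigma,W_1,W_2)$ of $X_K$: the complementary piece $W_1=\overline{X_K\setminus W_2}$ is a collar $\partial X_K\times I$ with the $t$ one-handles $\eta(\tau_i)$ attached, hence a compression body with $\partial_-W_1=\partial X_K$ and $\partial_+W_1=\Sigma$ of genus $t+1$. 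Conversely, for a genus-$g$ Heegaard splitting of $X_K$ the compression-body side must be the one containing the single torus boundary component (the other side is then a handlebody), and a spine of that compression body rel $\partial_-$ can be taken to consist of $\partial X_K$ together with $g-1$ arcs, which form a tunnel system with $g-1$ tunnels.

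Given this dictionary, the statement about the tunnel number is immediate: the tunnel number of $K$ is the minimum of $t$ over all tunnel systems, which equals the Heegaard genus of $X_K$ minus one, and by Theorem~\ref{mainthm} that genus is $n$, so the tunnel number is $n-1$.

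For the bound on the number of tunnel systems, the point is that the correspondence above respects the relevant equivalence relations. Edge slides and isotopies of a spine leave the isotopy class of the compression body it spans unchanged, so tunnel systems related by isotopy and edge slides give isotopic Heegaard splittings; this yields a well-defined map from minimal tunnel systems modulo isotopy and edge slides to minimal-genus Heegaard splittings of $X_K$ modulo isotopy. This map is injective, because any two spines of a fixed compression body, rel $\partial_-$, are related by a sequence of edge slides together with an ambient isotopy; I would cite this (the compression-body analogue of the classical fact for handlebodies) rather than reprove it, and I expect it to be the only step that needs real care. Combining injectivity with the bound of ${2n\choose n}$ on isotopy classes of minimal-genus Heegaard splittings supplied by Theorem~\ref{mainthm} gives the claimed bound. (That this bound is actually attained --- i.e.\ that one can exhibit representatives of all ${2n\choose n}$ classes of tunnel systems --- is the content of the construction carried out in Section~7, and is not needed for the corollary as stated.)
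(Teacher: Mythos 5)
Your proof is correct and follows essentially the same route as the paper: the standard duality between tunnel systems and Heegaard splittings of $X_K$, combined with the genus bound and the ${2n\choose n}$ count from Theorem~\ref{mainthm}, with injectivity coming from the fact that spines (equivalently, complete systems of compression disks) of a fixed compression body are unique up to isotopy and edge/handle slides. The paper's Section~7 additionally builds explicit representatives of these tunnel systems, but its justification of the corollary rests on exactly the facts you invoke.
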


This extends Kobayashi's classification of unknotting tunnels for $2$-bridge knots in \cite{kob99}: the $\displaystyle{4\choose 2}=6$ unknotting tunnels classified there correspond to genus-2 Heegaard splittings of $S^3\setminus K$ all of which become equivalent after at most one stabilization. 

\section{Constructing Heegaard splittings}

The following is a special case of a theorem of Tomova: 
\begin{thm}[Theorem 10.3 in \cite{tom07}]
Let $K$ be a nontrivial knot in $S^3$ with bridge number $n\geq 3$. Let $S$ be a bridge sphere for $K$. If $\Sigma$ is a bridge surface for $K$, or a Heegaard surface for $X_K$, then either $\Sigma$ is obtained from $S$ by a sequence of meridional stabilizations and stabilizations, or $d(S)\leq 2-\chi(\Sigma-K)$.\label{maggy-10.3}
\end{thm}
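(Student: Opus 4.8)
The plan is to deduce this statement from the general comparison theorem of Tomova rather than to reprove it, and below I also sketch the shape of the underlying argument. In \cite{tom07} the reference surface is allowed to be an arbitrary bridge surface in an arbitrary $3$-manifold; here we only need the case in which it is a minimal bridge sphere $S$ for a knot $K$ in $S^3$ and the comparison object is a bridge or Heegaard surface $\Sigma$ for $X_K$. Since a minimal $n$-bridge sphere meets $K$ in $2n$ points we have $\chi(S-K)=2-2n$, and Theorem 10.3 in \cite{tom07} then says that either $\Sigma$ is built from $S$ by stabilizations and meridional stabilizations, or $d(S)\leq 2-\chi(\Sigma-K)$. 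So at the level of citation the only thing to check is that the hypotheses of Theorem 10.3 in \cite{tom07} are met and to record the Euler characteristic; there is no real obstacle there.

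For completeness I would recall the structure of Tomova's proof. One puts $S$ and $\Sigma$ in general position as sweepouts and forms the Rubinstein--Scharlemann graphic in $[0,1]\times[0,1]$, the locus of parameters $(s,t)$ at which the level sphere $S_s$ fails to meet the level surface $\Sigma_t$ transversally (compare \cite{scharl96, joh10}). Each complementary region of the graphic is labeled by how the curves of $S_s\cap\Sigma_t$ lie on the two surfaces --- whether $\Sigma_t$ is spanned or split by $S_s$ and symmetrically --- with the convention, forced by the presence of $K$, that a curve bounding a once-punctured disk counts as inessential. If some region carries a labeling witnessing that $S_s$ both spans and splits $\Sigma_t$ appropriately, an innermost-disk and edge-collapse analysis converts that region into an isotopy exhibiting $\Sigma$ as a sequence of (meridional) stabilizations of $S$: the first alternative. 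If no such region exists, a Sperner-type parity count on the labels around $\partial([0,1]^2)$ produces either a pair of disjoint cut or compressing disks on opposite sides of $\Sigma$, or a short path of essential curves on $\Sigma$ joining the disk set of one side of $S$ to that of the other; the length of that path, hence $d(S)$, is bounded by the complexity of the intersection pattern, which after the usual Euler-characteristic bookkeeping is $2-\chi(\Sigma-K)$.

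The genuinely delicate part --- carried out in full in \cite{tom07}, and not something we reproduce --- is this labeling analysis: ensuring the notion of essential curve is the right one once $K$ punctures the surfaces, ruling out configurations in which $S_s\cap\Sigma_t$ is inessential on $\Sigma$ for reasons not coming from an actual stabilization, and converting a favorable region of the graphic into an explicit ambient isotopy. For the purposes of this paper it is enough to invoke Theorem 10.3 of \cite{tom07} and substitute the Euler characteristic of the $2n$-punctured sphere, which is exactly what the statement above records.
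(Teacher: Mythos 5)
Your proposal is correct and matches the paper's treatment exactly: the paper offers no proof of this statement, presenting it only as a quoted special case of Theorem 10.3 of \cite{tom07}, and you likewise reduce it to a citation (your sketch of Tomova's Rubinstein--Scharlemann graphic argument is accurate but not something the paper attempts to reproduce). The only thing to verify is that the hypotheses of Tomova's theorem apply, which they do.
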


Thus, if $K$ is $n$-bridge and $d(K)>2n$, then every non-stabilized $(g,n)$ bridge surface for $K$, and every genus-$n$ Heegaard surface for $X_K$, is constructed by meridionally stabilizing the unique bridge sphere $S$. This in turn implies that no sequence of meridional stabilizations of $S$ yields a surface which is stabilized. This also implies that the bridge sphere $S$ which realizes the bridge distance is unique up to isotopy. 

For the purposes of establishing some notation, we prove the following obvious lemma:

\begin{lem} If $\Sigma'$ is obtained from $(g,n)$-bridge surface $\Sigma$ by a single meridional stabilization,  then $\Sigma'$ is a $(g+1,n-1)$-bridge surface.\label{lem-merid-stab}\end{lem}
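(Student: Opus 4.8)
The plan is to unwind the definition of meridional stabilization and verify the three things that go into being a $(g+1,n-1)$-bridge surface: that $\Sigma'$ is a Heegaard surface of genus $g+1$, that $|\Sigma'\cap K|=2(n-1)$, and that the arcs $K$ cuts off in the two compression bodies form a pairwise-disjoint collection of bridge arcs. Fix notation: let $\Sigma$ be the $(g,n)$-bridge surface with handlebodies $W_1,W_2$, and suppose the stabilization is performed along a bridge arc $\gamma\subset W_2$ with bridge disk $D$ ($\partial D=\gamma\cup\alpha$, $\alpha\subset\Sigma$). By definition $W_1'=W_1\cup N(\gamma)$, $W_2'=\overline{W_2\setminus N(\gamma)}$, and $\Sigma'=\partial W_1'=\partial W_2'$, where $N(\gamma)\cong I\times D^2$ is a thin tube about $\gamma$ meeting $\Sigma$ in two disks around $\partial\gamma$; thus $\Sigma'$ is $\Sigma$ with those two disks removed and the annulus $I\times\partial D^2\subset\partial N(\gamma)$ glued in. Attaching the $1$-handle $N(\gamma)$ to the genus-$g$ handlebody $W_1$ along two disks yields a genus-$(g+1)$ handlebody; correspondingly $W_2'$ is $W_2$ with an open regular neighborhood of the boundary-parallel arc $\gamma$ drilled out, which is again a handlebody, now of genus $g+1$. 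Hence $\operatorname{genus}(\Sigma')=g+1$. Choosing $N(\gamma)$ small enough that $K\cap N(\gamma)=\gamma$, we get $K\cap\Sigma'=(K\cap\Sigma)\setminus\partial\gamma$, which has $2n-2$ points and is still a transverse intersection.

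It remains to check that $K\cap W_i'$ is a bridge-arc collection for $i=1,2$. Let $\gamma_2,\dots,\gamma_n$ be the bridge arcs of $K\cap W_2$ other than $\gamma$, with pairwise-disjoint bridge disks $D_2,\dots,D_n$, which by definition of a bridge surface may be taken disjoint from $D$, hence from $\gamma$; taking $N(\gamma)$ disjoint from the $D_j$ as well, $K\cap W_2'=\gamma_2\sqcup\dots\sqcup\gamma_n$ retains these bridge disks, so it is a bridge-arc collection of $n-1$ arcs. On the other side, let $\delta_1,\dots,\delta_n$ be the bridge arcs of $K\cap W_1$ with pairwise-disjoint bridge disks $E_1,\dots,E_n$; the two points $\partial\gamma$ lie on two of them, say $\delta_1$ and $\delta_2$, so in $W_1'$ the arcs $\delta_1,\gamma,\delta_2$ are joined into a single arc $\delta$, and $K\cap W_1'=\delta\sqcup\delta_3\sqcup\dots\sqcup\delta_n$ has $n-1$ arcs. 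The arcs $\delta_3,\dots,\delta_n$ keep (minor adjustments of) the disjoint disks $E_3,\dots,E_n$, so the one substantive point is to produce a bridge disk for $\delta$ disjoint from these.

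For that, note that inside the tube $N(\gamma)$ the arc $\gamma$ runs parallel to an arc lying on $\Sigma'$, so $\delta$ may be isotoped rel endpoints within $W_1'$ until its middle portion lies on $\Sigma'$; banding $E_1$ and $E_2$ together by a rectangle running alongside $\gamma$ inside $N(\gamma)$ then produces the desired bridge disk, and it meets none of $E_3,\dots,E_n$ because the rectangle lies in $N(\gamma)$. This is the only step needing genuine care — verifying that the long arc $\delta=\delta_1\cup\gamma\cup\delta_2$ is boundary-parallel in $W_1'$ and that all the bridge disks can be made simultaneously disjoint; once it is in hand, $\Sigma'$ is a $(g+1,n-1)$-bridge surface. (For $n=1$ the conclusion becomes the degenerate statement that $K$ becomes parallel into $\Sigma'$, which is checked similarly but plays no role in the applications, where $n\ge 3$.)
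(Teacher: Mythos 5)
Your proof is correct, and it follows the same basic plan as the paper's: unwind the definition of a meridional stabilization and check the defining properties of a $(g+1,n-1)$-bridge surface directly. The difference is one of emphasis and completeness. The paper's proof concentrates on exhibiting the cancelling pair of disks --- the meridian disk $D$ of $N(\gamma)$ and the truncated bridge disk $D_\gamma'$ meeting it in one point --- to conclude that $\Sigma'$ is a stabilization of $\Sigma$ and hence a genus-$(g+1)$ Heegaard surface, and then simply \emph{asserts} that each side contains $n-1$ arcs of $K$. You instead get the genus count from the $1$-handle attachment and then do the work the paper skips: verifying that the remaining arcs on the drilled side keep disjoint bridge disks, and that the concatenated arc $\delta_1\cup\gamma\cup\delta_2$ on the other side is boundary-parallel via the band $E_1\cup R\cup E_2$ with $R$ a rectangle inside $N(\gamma)$, disjoint from the untouched disks $E_3,\dots,E_n$. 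That is the genuinely substantive point in the lemma and your treatment of it is right (including the implicit observation that $\delta_1\neq\delta_2$ unless $n=1$, which you dispose of parenthetically). The one thing worth retaining from the paper's version is the explicit pair $(D,D_\gamma')$: the meridian disks $D$ are quoted by name in the paragraph following the lemma to show that $K$ is parallel to a spine of the handlebody containing it, so if you were writing this for the paper you would want to keep that notation visible rather than folding it into the $1$-handle argument.
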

\begin{proof}
Note that, in order to be able to perform a meridional stabilization, $n$ must be greater than $0$.

Let $V,W$ be the handlebodies on either side of $\Sigma$ and $\gamma$ an arc of $K\cap V$. Let $W'=W\cup N(\gamma)$ and $V'=\overline{V\setminus N(\gamma)}$ so that $V'\cap W'=\Sigma'$. Since $\gamma$ can be isotoped into $\partial V$, there is a disk $D_\gamma\subset V$ so that $\partial D_\gamma=\gamma\cup\alpha$,  $\alpha\subset\partial V$. Let $D_\gamma'=\overline{D_\gamma\setminus N(\gamma)}$. If $\varphi:N(\gamma)\rightarrow I\times D^2$ is the obvious homeomorphism sending $\gamma$ to $I\times\{0,0\}$, let $D=\varphi^{-1}\left(\left\{\dfrac{1}{2}\right\}\times D^2\right)$ be a meridian disk of $N(\gamma)$. Since $D_\gamma'\cap N(\gamma)$ is an arc parallel to $\gamma$, $D_\gamma'\cap D$ is a single point, and therefore $\Sigma'$ is a stabilization of $\Sigma$, and is a Heegaard splitting of $S^3$. The genus of $\Sigma'$ is $g+1$, and the number of arcs of $K$ in $W'$ and $V'$ each is $n-1$.
\end{proof}

\begin{figure}[h!]
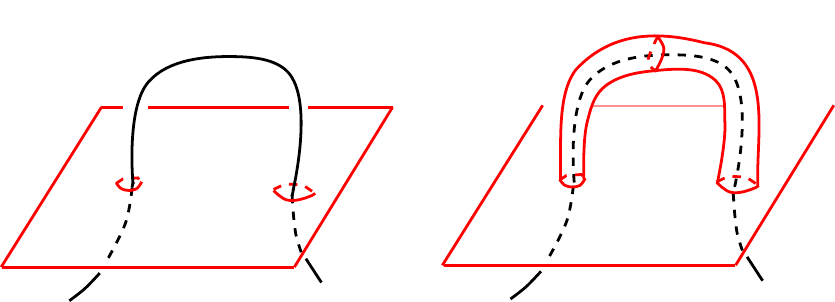
\caption{Diagram of proof of Lemma \ref{lem-merid-stab}.}
\label{fig-merid-stab}
\end{figure}

If $\Sigma$ is an $n$-bridge sphere, a sequence of $n$ meridional stabilizations results in an $(n,0)$-bridge surface $\Sigma'$. If the two handlebodies on either side of $\Sigma'$ are $V'$ and $W'$, one of them contains the knot $K$, say $V'$. $V'$ has $n$ compression disks, $D_i$ (the meridian disks $D$ of $N(\gamma)$ in the proof of Lemma \ref{lem-merid-stab}) which each intersect $K$ in exactly one point. Thus $K$ is parallel to a subset of a spine of $V'$, and therefore $\Sigma'$ is also a Heegaard surface for the knot complement $X_K$, with handlebody $W'$ and compression body $\overline{V'\setminus N(K)}$. 

\begin{dfn}
A \emph{tubed Heegaard surface} is a Heegaard surface for $X_K$ which is constructed by meridional stabilizations from a bridge sphere for $K$ in $S^3$.\end{dfn}

Note again that Theorem \ref{maggy-10.3} implies that, if $K$ is $n$-bridge and $d(K)>2n$, then every minimal genus Heegaard surface of $X_K$ is a tubed Heegaard surface.

The isotopy class of $\Sigma'$ may depend on the choice of meridional stabilizations, or the order in which they are performed. For example, Figure \ref{2bridge_splittings} shows three possible splittings of a 2-bridge knot complement resulting from meridional stabilizations of a bridge sphere. Though Theorem \ref{mainthm} concerns knots with bridge number $3$ or greater, Figure \ref{2bridge_splittings} presents heuristic pictures of $2$-bridge knots for simplicity. For a $3$-bridge knot with distance $6$ or greater, the constructions in figures \ref{2bridge-1} and \ref{2bridge-2} do not represent the same surface, because $K$ is on opposite sides of the resulting surface (see section 4). For a figure-8 knot, these two Heegaard splittings correspond to the two unique unknotting tunnels. The surface in Figure \ref{2bridge-3} may or may not be isotopic to the surface of Figure \ref{2bridge-1}. For the figure-8 knot, they are indeed isotopic, but the question of when they are distinct remains open for knots with bridge number $2$ or greater. For $2$-bridge knots, it is answered by Morimoto and Sakuma in \cite{mor91}.

\begin{figure}[h!]
\subfigure[]{\includegraphics[width=0.3\textwidth]{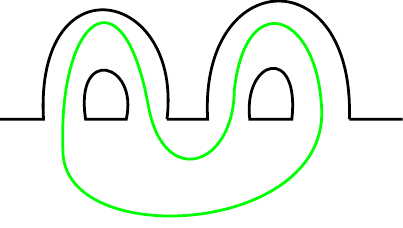}\label{2bridge-1}}
\subfigure[]{\includegraphics[width=0.3\textwidth]{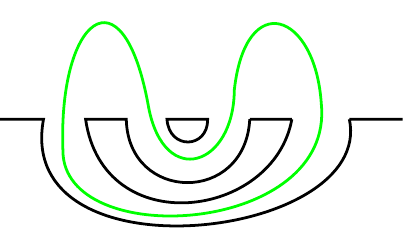}\label{2bridge-2}}
\subfigure[]{\includegraphics[width=0.3\textwidth]{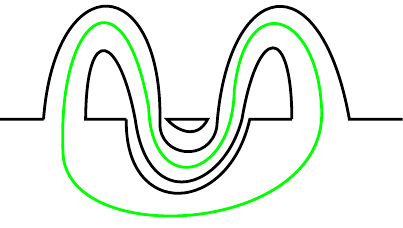}\label{2bridge-3}}
\caption{Three possible splittings of a 2-bridge knot complement.}
\label{2bridge_splittings}
\end{figure}

Before proving the next lemma, we note some definitions. If $S$ is a bridge sphere for $K$, a t ubed Heegaard surface $\Sigma$ may be viewed as $\left(S\setminus \cup_{x\in S\cap K}N(x)\right)\bigcup\left(\cup_{i=1}^n A_i\right)$, where the neighborhoods $N(x)$ are pairwise disjoint, and the $A_i$ are annuli with $A_i\cap S=\partial A_i$ $\forall i$. Each such annulus $A_i$ is the image of an embedding \[\varphi_i:(S^1\times I,S^1\times\{0\},S^1\times\{1\})\rightarrow(S^3\setminus K,\partial N(x_i),\partial N(x_j)),\] and the images of all the $\varphi_i$ are pairwise disjoint.

\begin{dfn}Let $\gamma, \alpha, D_\gamma, D_\gamma'$ be as in Lemma \ref{lem-merid-stab}, let $z\in S^1$ and choose $\varphi_i$ so that $\varphi_i(\{z\}\times I)=D_\gamma\cap A_i$. $D_\gamma\setminus D_\gamma'$ is a square with sides $\gamma$, $\partial D_\gamma'\setminus\alpha$, and two components of $\alpha\cap N(\gamma)$. Given an orientation of $K$ and orienting $I$ from 0 to 1, the annulus $A_i$ \emph{runs along $K$} if the arcs $\gamma$ and $\varphi\left(\{z\}\times I\right)$ with the orientations induced by $I$ are parallel in the square.\label{dfn-runs-along}\end{dfn}

The point of Definition \ref{dfn-runs-along} is just to have a unique way of identifying the annuli by one of the endpoints, as follows:

\begin{dfn}If $A_i$ is an annulus which runs along $K$ with $\varphi_i(S^1\times\{0\})\cap S=\partial N(x_i)$ and $\varphi_i(S^1\times\{1\})\cap S=\partial N(x_j)$, then $A_i$ is said to have its \emph{left foot at $x_i$} and \emph{right foot at $x_j$}. The difference $j-i\mod 2n$ is the \emph{length} of the annulus.\label{dfn-feet}\end{dfn}

We take the convention that the annulus $A_i$ has left foot at $x_i$, and $r(i)$ denotes the index of the right foot of the annulus.

\begin{dfn}With $K$ an $n$-bridge knot and $S$ a bridge sphere, let $S\cap K=\{x_1,...,x_{2n}\}$, labelled by picking a point on $K$, following $K$ along a fixed orientation, and labelling the points of $S\cap K$ in order of increasing index. Let $\Sigma$ be a tubed Heegaard surface, so that $\Sigma=\left(\overline{S\setminus\cup_{i=1}^{2n}N(x_i)}\right) \cup\left(\cup_{i=1}^nA_i\right)$ so that each $A_i$ runs along $K$. Let $I_0=\{\alpha_i\}_{i=1}^n$ be the set of indices from $1$ to $2n$ that are left feet of the annuli, that is, $\varphi_i(S^1\times\{0\})\cap S=\partial N(x_{\alpha_i})$. The set $I_0$ is the \emph{index} of the tubed Heegaard surface $\Sigma$ for the fixed orientation of $K$ and labelling of $S\cap K$. See Figure \ref{fig-index}.\label{dfn-index}\end{dfn} 

\begin{figure}[h!]
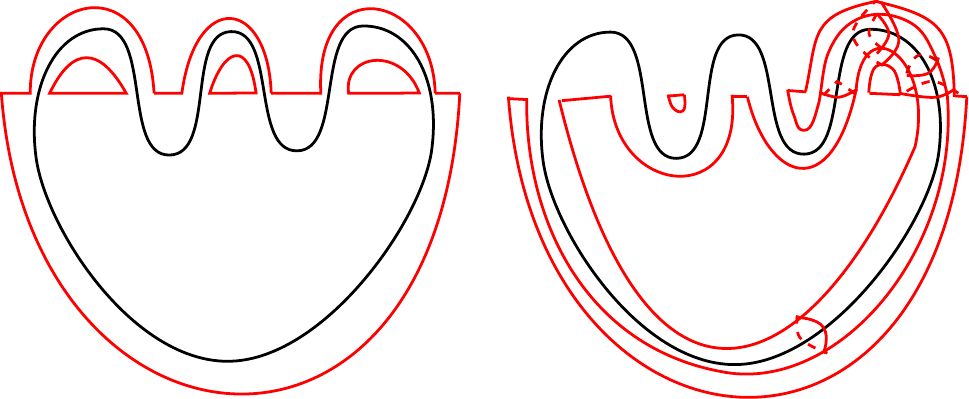
\caption{}
\label{fig-index}
\end{figure}

Since every tubed Heegaard surface has an index, this definition gives the upper bound of $\displaystyle{2n \choose n}$ on the number of tubed Heegaard surfaces of $K$. Now we show that every choice of $n$ numbers is the index of some tubed Heegaard surface:

\begin{lem}Let $I_0$ be any unordered list of $n$ elements of $\{1,2,...,2n\}$ without repititions. This $I_0$ can be realized as the index of a tubed Heegaard surface.\label{lem-2n-choose-n}\end{lem}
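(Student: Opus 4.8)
The plan is to construct the required tubed Heegaard surface directly, by choosing the $n$ meridional stabilizations one at a time, and to reduce that choice to a purely combinatorial game on the $2n$ points of $S\cap K$. So the first thing I would do is record how the index is built up one stabilization at a time. If $\Sigma$ is any bridge surface for $K$, the points of $\Sigma\cap K$ cut $K$ into arcs, each lying in one of the two compression bodies and hence a bridge arc; call these the \emph{gap arcs}, and note that these are exactly the arcs along which a meridional stabilization may be performed. Performing a meridional stabilization along the gap arc $\gamma$ that runs from $x_a$ to $x_b$ in the orientation of $K$ produces a tube parallel to $\gamma$; by the definition of ``runs along $K$'' this tube has its left foot at $x_a$ and right foot at $x_b$, and after the stabilization the points $x_a,x_b$ are no longer on the surface, while the two gap arcs adjacent to $\gamma$ are joined with $\gamma$ into a single new gap arc. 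By Lemma \ref{lem-merid-stab} the surface at each stage is again a bridge surface, so this can be iterated $n$ times starting from $S$, producing an $(n,0)$-bridge surface --- that is, a tubed Heegaard surface --- whose $n$ annuli run along $K$ with left feet the $n$ points recorded as left feet along the way.

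It therefore suffices to prove the following combinatorial statement: on a cyclically ordered set $P$ of $2k$ points, oriented so that each gap has a well-defined initial point, if we repeatedly pick a gap (an arc between cyclically consecutive remaining points), record its initial point, and delete both its endpoints --- merging the two neighboring gaps into one --- then after $k$ steps the set of recorded points can be made equal to any prescribed $k$-element subset $T\subseteq P$. I would prove this by induction on $k$. For $k=1$ there are two points and two gaps, and tubing one or the other records either point. For the inductive step, $T$ and $P\setminus T$ are both nonempty, so travelling around $P$ in its cyclic order there is a consecutive pair $(a,b)$ with $a\in T$ and $b\notin T$; choosing the gap from $a$ to $b$ first records $a$, deletes $a$ and $b$, and leaves a cyclically ordered set $P'$ of $2(k-1)$ points with $T\setminus\{a\}\subseteq P'$, to which the inductive hypothesis applies. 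Applying this with $P=S\cap K$ (cyclically ordered and oriented by $K$ as in Definition \ref{dfn-index}) and $T=I_0$ produces a sequence of $n$ meridional stabilizations of $S$ realizing $I_0$ as the index, and the distance hypothesis is not needed here.

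The one point that is not purely formal is the geometric realization of the later stabilizations: when a gap arc obtained by merging is tubed, it is a sub-arc of $K$ that runs alongside tubes created at earlier stages, so one must check that the new tube can be isotoped to run parallel to $K$ while remaining disjoint from those earlier tubes and with its boundary circles lying on copies of $S$ punctured away from the other feet. This is routine --- the earlier tubes may be taken arbitrarily close to $K$ and the new tube routed just outside them --- but it is where the bookkeeping lives, and I expect it to be the place the write-up must be most careful. Everything else (that each chosen gap arc is genuinely a bridge arc, that the $(n,0)$-bridge surface obtained at the end is a Heegaard surface for $X_K$ as already observed in the text, and that each annulus runs along $K$ in the sense of Definition \ref{dfn-runs-along} because it is a push-off of a sub-arc of $K$ oriented to match) follows directly from the definitions and from Lemma \ref{lem-merid-stab}.

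In summary, the proof is: reinterpret a sequence of meridional stabilizations of $S$ as the gap-deletion game above; observe that each step records the initial endpoint of the tubed gap as a left foot; run the easy induction to hit any prescribed $T=I_0$; and translate the resulting sequence of gap choices back into $n$ honest meridional stabilizations, taking the (routine) care needed to keep the successive tubes embedded and disjoint.
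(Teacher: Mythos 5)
Your proof is correct, and it reaches the same goal as the paper's---realizing $I_0$ by an explicit schedule of meridional stabilizations---but the combinatorial engine is genuinely different. The paper attaches the annuli in batches of increasing length: it defines $J_k$ and $I_k$ recursively so that at stage $k$ every still-unplaced left foot $i_j\in I_0$ with $i_j+(2k-1)\bmod 2n$ not yet occupied receives an annulus of length $2k-1$, and checks that after the length-$(2n-1)$ stage every element of $I_0$ has been used. Your argument instead peels off one stabilization at a time: since $I_0$ is a proper nonempty subset of the cyclically ordered punctures, there is always an adjacent pair with $a\in I_0$ and $b\notin I_0$, and tubing that gap records $a$ and reduces to the same problem on $2(n-1)$ points. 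Your induction is arguably cleaner to verify (a one-line existence statement plus an evident size count, versus unwinding the paper's recursive definition of $J_k$), and it makes explicit two things the paper leaves implicit: that each tube is attached along a genuine bridge arc of the \emph{current} surface (so Lemma \ref{lem-merid-stab} applies at every step), and that the later tubes must be routed disjointly from the earlier ones. The paper's batch-by-length formulation buys a normal form in which the annuli come sorted by length, which is mildly convenient for the chunk decomposition used later, but nothing in the lemma itself requires it. You are also right that the distance hypothesis plays no role here; neither proof uses it.
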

\begin{proof}
The desired surface is constructed by attaching the annuli to $S\setminus\cup_iN(x_i)$ in order of increasing length, which is equivalent to performing meridional stabilizations in the correct order. Define $J_0=\emptyset$. For each $1\leq k\leq n$, let 
\[J_k=\left\{i_j\in I_0\setminus\left(\cup_{\ell=0}^{k-1}J_\ell\right)\Big| i_j+(2k-1)\mod 2n\not\in I_{k-1}\right\}\] and 
\[I_k=I_{k-1}\cup\left\{i_j+(2k-1)\mod 2n\Big| i_j\in J_k\right\}.\] 
For each $i_j\in J_k$, attach an annulus with left foot at $x_{i_j}$ and right foot at $x_{i_j+2k-1}$, where the indices are all computed $\mod 2n$.

Thus, at each $k$, $I_k$ is the set $I_0$ together with the indices of points which are right feet of annuli that have already been attached, that is, indices which are no longer available for attaching annuli, and $J_k$ is the set of indices of left feet of annuli of length $2k-1$. After gluing the annuli of length $2n-1$ with left feet in $J_n$, the resulting tubed Heegaard surface has index $I_0$ by construction. Thus every such set is the index of some tubed Heegaard surface, proving the lemma.
\end{proof}

Lemma \ref{lem-2n-choose-n} together with the uniqueness of the $n$-bridge sphere given by Theorem \ref{maggy-10.3} imply the following corollary: 
\begin{cor}
 If $K$ is an $n$-bridge knot with bridge distance at least $2n$, there are at most ${2n\choose n}$ distinct minimal Heegaard splittings of $X_K$.
\end{cor}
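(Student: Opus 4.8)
The plan is to show that every minimal genus Heegaard surface of $X_K$ is a tubed Heegaard surface, and that a tubed Heegaard surface is determined up to isotopy by its index; since there are exactly $\binom{2n}{n}$ possible indices (unordered $n$-element subsets of $\{1,\dots,2n\}$), this gives the bound. Lemma~\ref{lem-2n-choose-n} shows the resulting map is onto, so it is in fact a bijection, but for the corollary only the injectivity is needed.

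First I would pin down the Heegaard genus and reduce to tubed surfaces. If $\Sigma$ is a Heegaard surface of $X_K$ of genus $g$, then $\Sigma$ is disjoint from $K$, so $2-\chi(\Sigma-K)=2-\chi(\Sigma)=2g$, and the hypothesis $d(K)\ge 2n$ excludes the second alternative of Theorem~\ref{maggy-10.3}: $\Sigma$ arises from the bridge sphere $S$ by $p$ meridional stabilizations and $q$ ordinary stabilizations. Each operation raises the genus by one, so $p+q=g$; by Lemma~\ref{lem-merid-stab} each meridional stabilization lowers $|\Sigma\cap K|$ by two while ordinary ones leave it unchanged, so $2n-2p=0$. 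Hence $p=n$, $q=0$, and $g=n$. The same count shows no Heegaard surface of genus $<n$ exists (one would need $p=n\le g<n$), so the Heegaard genus of $X_K$ is exactly $n$ and every minimal genus Heegaard surface is a tubed Heegaard surface, built from $S$, which is unique up to isotopy by Theorem~\ref{maggy-10.3}.

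Next, each tubed Heegaard surface $\Sigma$ has a well-defined index $I_0$: after an isotopy putting it in the form $\left(\overline{S\setminus\cup_iN(x_i)}\right)\cup\left(\cup_{i=1}^nA_i\right)$ with every annulus $A_i$ running along $K$ (Definition~\ref{dfn-index}), $I_0$ is the set of indices of the left feet, an $n$-element subset of $\{1,\dots,2n\}$. There being $\binom{2n}{n}$ such subsets, it remains only to prove that $I_0$ determines $\Sigma$ up to isotopy, for then the assignment of a minimal splitting to its index is an injection into a set of size $\binom{2n}{n}$.

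This last step is where I expect the real work to be. Given $S$ with its marked points $x_1,\dots,x_{2n}$ (fixed up to isotopy) together with $I_0$, I would recover $\Sigma$ up to isotopy in two stages. First, the matching of the $2n$ points into (left foot, right foot) pairs is forced: an annulus runs from its left foot forward along $K$ to its right foot, and the requirement that the $n$ annuli be embedded and disjoint and that their union with $S\setminus\cup N(x_i)$ really be a Heegaard surface obtained by meridional stabilizations forces this matching to be the unique ``balanced'' one obtained by reading the left and right feet cyclically around $K$ as opening and closing parentheses; a ``crossing'' pairing is ruled out by an innermost-disk argument applied to the bridge disks of the stabilizing arcs. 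Equivalently, one shows any sequence of $n$ meridional stabilizations can be reordered, up to isotopy of the result, into the increasing-length order used in the proof of Lemma~\ref{lem-2n-choose-n}, whose output depends only on $I_0$. Second, with the matching fixed, each $A_i$ is a regular neighborhood of a push-off of the corresponding subarc of $K$, and since $K$ is in bridge position these push-offs — chosen mutually disjoint and nested compatibly with the matching — are unique up to isotopy, so $\Sigma$ is determined. Combining the three steps bounds the number of isotopy classes of minimal genus Heegaard splittings of $X_K$ by $\binom{2n}{n}$, which is the corollary.
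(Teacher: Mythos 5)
Your proposal is correct and follows essentially the same route as the paper: Tomova's theorem forces every minimal genus splitting to be a tubed Heegaard surface built from the unique bridge sphere, and the index then bounds the count by $\binom{2n}{n}$. You are in fact more careful than the paper in one respect --- the paper simply asserts that the existence of an index gives the upper bound, whereas you explicitly argue that the index determines the surface up to isotopy (via the forced non-crossing matching of feet), which is the implicit injectivity step the bound actually requires; note only that at the borderline $d=2n$ the second alternative of Theorem \ref{maggy-10.3} is not strictly excluded for a genus-$n$ surface, an imprecision already present in the corollary's ``at least $2n$'' versus the $d>2n$ hypothesis used elsewhere in the paper.
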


\section{Common stabilizations of same-side tubed Heegaard splittings}

The set of all tubed Heegaard surfaces for $K$ can be divided into two families, depending on which side of the initial bridge sphere is adjacent to $K$ in the final splitting. In this section, we prove that if two tubed Heegaard surfaces lie on the same side of $K$, they become equivalent after just one stabilization. 

\begin{dfn}
Let $\Sigma\subset S^3$ be a separating surface, and $\A$ an annulus properly embedded in $S^3\setminus \Sigma$. Let $\Sigma'$ be obtained from $\Sigma$ by cutting $\Sigma$ along $\partial \A$ and gluing in two copies of $\A$. We say that $\Sigma'$ is obtained from $\Sigma$ by an \emph{annulus compression} along the \emph{compressing annulus} $\A$. See Figure \ref{fig-ann-comp}.
\end{dfn}

\begin{dfn}
Let $\A$ be a compressing annulus for a tubed Heegaard surface such that $\partial\A$ consists of essential loops contained in two annuli of $\Sigma$, $\gamma_i\subset A_i$ and $\gamma_j\subset A_j$. Let $\psi:S^1\times I\rightarrow\A\hookrightarrow S^3$ be the homeomorphism with image $\A$, so that $\psi(S^1\times\{0\})=\gamma_i$ and $\psi(S^1\times\{1\})=\gamma_j$. With the orientation on $I$ from $0$ to $1$, if the induced orientation on $\psi(\{z\}\times I)$ for some $z\in S^1$ is parallel to the fixed orientation of $K$, then $\A$ \emph{runs along} $K$ and is denoted as $[ij]$. Further, to allow for the case $i=j$ without ambiguity, we require that $[ij]$ have $\gamma_i=\partial N(x_{r(i)})$ and $\gamma_j=\partial N(x_j)$. Note that the compressing annulus $[ij]$ is therefore different from the compressing annulus $[ji]$. \label{dfn-compressing-annulus-runs-along}
\end{dfn}

\begin{lem}If $\Sigma$ is a tubed Heegaard surface for $K$ and $\A$ a compressing annulus $[ij]$ with $i\neq j$, then an annulus compression along $\A$ is equivalent to a single stabilization and destabilization of $\Sigma$. If the index of $\Sigma$ is $I_0$, then after such an annulus compression the index of the resulting tubed Heegaard surface is $\sigma(I_0)$, where $\sigma\in S_{2n}$ is the 2-cycle $(r(i)\ \ j)$.
\label{lem-annulus-stabilization}\end{lem}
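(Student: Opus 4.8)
The plan is to prove the two assertions in turn; the second is a bookkeeping computation while the first carries the topological content.

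For the first assertion, to say the annulus compression along $\A$ is equivalent to a single stabilization and destabilization of $\Sigma$ is to exhibit a Heegaard surface $\hat\Sigma$ for $X_K$ of genus $n+1$ that is simultaneously a stabilization of $\Sigma$ and of the surgered surface $\Sigma'$. After an isotopy, assume $\A$ is properly embedded in one component $\mathcal C$ of $X_K\setminus\Sigma$, a compression body. By the construction in Lemma \ref{lem-merid-stab}, each tube $A_k$ carries a meridian disk near each of its feet, bounded on the $\mathcal C$ side; since $\gamma_i$ is parallel on $A_i$ to $\partial N(x_{r(i)})$ and $\gamma_j$ is parallel on $A_j$ to $\partial N(x_j)$, these are (isotopic to) boundaries of such disks $\delta_i,\delta_j\subset\mathcal C$. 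I would obtain $\hat\Sigma$ by stabilizing $\Sigma$ with a small trivial tube placed on $A_i$ beside $\gamma_i$, and then producing a second cancelling pair for that tube whose cocore is paired with the disk obtained by banding $\delta_i$ to $\delta_j$ along a spanning arc of $\A$; destabilizing $\hat\Sigma$ along this second pair cuts $A_i$ and $A_j$ along $\gamma_i$ and $\gamma_j$ and reglues them through two copies of $\A$ — that is, it performs precisely the annulus compression — and so returns $\Sigma'$. Since $\hat\Sigma$ also destabilizes to $\Sigma$ along the obvious small cancelling pair of the new tube, it is the desired common stabilization.

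The step I expect to be the main obstacle is verifying that the second disk — built from $\delta_i$, $\delta_j$ and the band along $\A$ — really meets the cocore of the small stabilizing tube exactly once, so that it is a genuine cancelling pair; equivalently, that $\A$ is ``unknotted relative to $\Sigma$'' in the sense needed for this destabilization to exist. This is where the normalizations of Definition \ref{dfn-compressing-annulus-runs-along} are used ($\partial\A$ consists of the meridians $\partial N(x_{r(i)})$ and $\partial N(x_j)$, and $\A$ runs along $K$), and where the hypothesis $i\neq j$ is essential: if $i=j$ the band returns to the same tube $A_i$, the intersection number with the cocore comes out wrong, and the annulus compression is not a single stabilization.

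For the second assertion, I would compute combinatorially near the annulus compression. Cut $A_i$ along $\gamma_i$ into the piece $B_i$ from the foot $x_i$ to $\gamma_i$ and the collar $C_i$ from $\gamma_i$ to the foot $x_{r(i)}$, and cut $A_j$ along $\gamma_j$ into the collar $C_j$ from $x_j$ to $\gamma_j$ and the piece $B_j$ from $\gamma_j$ to $x_{r(j)}$. The annulus compression reglues $B_i, C_i, C_j, B_j$ through the two parallel copies of $\A$; there are two combinatorially possible regluings, but in each the two new annuli have feet $\{x_i,x_{r(j)}\}$ and $\{x_{r(i)},x_j\}$. Since each copy of $\A$ runs along $K$ from near $x_{r(i)}$ forward to near $x_j$, while $C_i$ and $C_j$ are collars that can be isotoped away, after that isotopy the new annuli run along $K$ with left feet $x_i$ and $x_{r(i)}$ respectively. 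Hence the set of left feet passes from $I_0$ to $(I_0\setminus\{j\})\cup\{r(i)\}$; as $j\in I_0$ while $r(i)\notin I_0$ (the left and right feet partition $\{1,\dots,2n\}$, and $r(i)\neq j$ because $x_j$ is a left foot), this is exactly $\sigma(I_0)$ for the transposition $\sigma=(r(i)\ j)$. The resulting surface is a tubed Heegaard surface — it agrees with $\Sigma$ outside a neighborhood of $A_i\cup A_j\cup\A$ and there consists of two annuli running along $K$, and by the first assertion it is a genus-$n$ Heegaard surface for $X_K$ — so this computation determines its index, completing the proof.
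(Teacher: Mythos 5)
Your strategy for the first assertion --- exhibit one genus-$(n+1)$ surface $\hat\Sigma$ that destabilizes both to $\Sigma$ and to the compressed surface --- is the right one, and your index computation in the final paragraph is correct and agrees with the paper's. But the topological heart of the lemma is exactly the step you flag as ``the main obstacle'' and then leave unverified, so the proof is incomplete; worse, the construction you sketch does not obviously admit the verification you defer. Three concrete problems: (i) in $X_K$ the meridian disks $\delta_i,\delta_j$ of the tubes are each punctured once by $K$, so they are cut disks rather than compressing disks, and their band sum is an honest disk only after you argue that the two punctures cancel along the band (this is where ``$\A$ runs along $K$'' must be invoked, and you do not). (ii) $\delta_i$ and $\delta_j$ lie on the opposite side of $\Sigma$ from $\A$, so compressing along their band sum modifies the surface on the side away from $\A$; the claim that this ``reglues $A_i$ and $A_j$ through two copies of $\A$'' is precisely what needs proof and is not evidently true --- the band sum has connected boundary, while the annulus compression cuts along the two separate curves $\gamma_i$ and $\gamma_j$. (iii) As described, your banded disk does not interact with the ``small trivial tube beside $\gamma_i$'' at all, so compressing $\hat\Sigma$ along it amounts to compressing $\Sigma$ along a disk and carrying the trivial tube along; since $\Sigma$ is unstabilized, that cannot yield a genus-$n$ Heegaard surface.

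The paper's proof sidesteps all of this by placing the stabilizing handle along $\A$ itself: take $\beta$ a spanning arc of $\A$ and tube $\Sigma$ to itself along $N(\beta)$. This is a stabilization because $\A$ runs along $K$, hence $\beta$ is parallel into $\Sigma$ and there is a dual disk $\Delta$ meeting the meridian of $N(\beta)$ once. The destabilizing disk is then $D=\A\setminus N(\beta)$, a subdisk of $\A$ lying on the correct side, so compressing along $D$ visibly reconstitutes the two parallel copies of $\A$, i.e., performs the annulus compression; and the intersection count you were worried about becomes immediate, since the disk $D_\gamma'$ from the meridional stabilization of $A_i$ (or of $A_j$) meets $D$ in exactly one point --- the hypothesis $i\neq j$ is used exactly here, to ensure that only one of the two such disks meets $D$. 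I suggest you rebuild your first paragraph along these lines; your second paragraph can stand essentially as written.
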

\begin{proof}
Let $\psi$ be as in Definition \ref{dfn-compressing-annulus-runs-along}, and $\beta=\psi\left(\{z\}\times I\right)$ be a spanning arc in $\A$ connecting $\gamma_i$ and $\gamma_j$. Let $N(\beta)$ be a small enough neighborhood of $\beta$ in $S^3$ so that $N(\beta)\cap \Sigma$ is just two disks in $\Sigma$ which are neighborhoods of $\partial\beta$, $D_i$ and $D_j$. Since $\A$ runs along $K$, $\beta$ is parallel into $\Sigma$, and therefore there is a disk $\Delta$ properly embedded in $S^3\setminus \Sigma\setminus\A$ which intersects a meridian disk of $N(\beta)$ in exactly one point, so $\Sigma' = (\Sigma-D_i-D_j)\cup\partial N(\beta)$ is a stabilization of $\Sigma$. 

Consider $D=\A\setminus N(\beta)$. $\partial D$ consists of $\gamma_i\setminus N(\beta)$, $\gamma_j\setminus N(\beta)$, and two spanning arcs on $N(\beta)$. Since both $A_i$ and $A_j$ are the results of meridional stabilization, they both have the disks $D_\gamma'$ described in the proof of Lemma \ref{lem-merid-stab}, and since $i\neq j$ either of these disks only intersect $D$ in one point. Therefore compressing along $D$ is a destabilization of $\Sigma'$, and the end result is identical to performing an annulus compression along $\A$. 

The annulus compression $[ij]$ has the effect of connecting the left foot of $A_i$ to the right foot of $A_j$, and the right foot of $A_i$ is now the left foot of an annulus that ends at the left foot of $A_j$. Therefore the index of the new tubed Heegaard surface is $I_0$ with $r(i)$ instead of $j$. See Figure \ref{fig-ann-comp}
\end{proof}

\begin{figure}
\def\svgwidth{\linewidth}
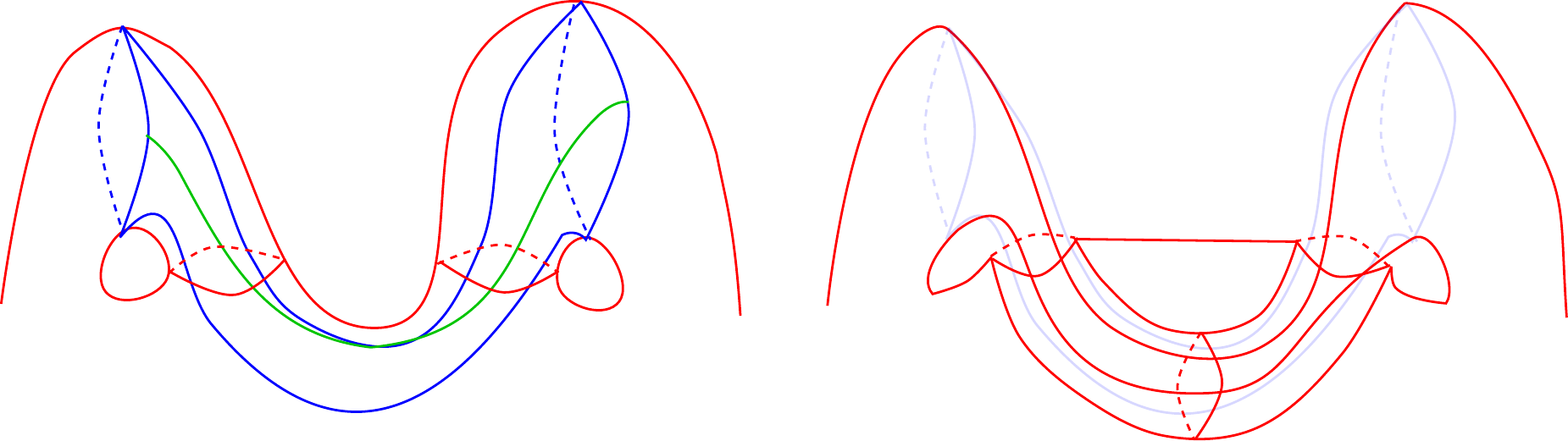
\caption{}
\label{fig-ann-comp}
\end{figure}

Observe that if $i=j$, the surface after compressing along $[ij]$ is disconnected, with one boundary-parallel component. For this reason, we will always assume that $i\neq j$ unless stated otherwise.

\begin{dfn}
Given a bridge sphere for $K$, label one side $+$ and the other $-$. Let $\Sigma$ be a tubed Heegaard surface derived from this bridge sphere. If $K$ is adjacent to the $-$ side of the portion of the bridge sphere outside the $N(x_i)$, $K$ is \emph{below} $\Sigma$. Else, $K$ is \emph{above} $\Sigma$. \label{dfn-above-below}
\end{dfn}

%%%%%%%%%%%%%%%%%%%%%%%%%%%%%%%%%%%% Page 6 Dfn 3.4

It is also possible to think of above and below in terms of Morse functions: let $K\subset S^3$, and $f:S^3\rightarrow [-1,1]$ be a Morse function on $S^3$ so that $f^{-1}(t)$ is a 2-sphere for $t\in(-1,1)$, $f|_K$ is also Morse with image $\left[-\dfrac{1}{2},\dfrac{1}{2}\right]$, and $S=f^{-1}(0)$ is a bridge sphere for $K$. Let $\Sigma$ be a tubed Heegaard surface for $K$ from $S$ such that $f|_\Sigma$ has image $\left[-\dfrac{3}{4},\dfrac{3}{4}\right]$. If $f^{-1}(-1)$ is a point in the handlebody containing $K$, we say that $K$ is \emph{below} $\Sigma$. If, on the contrary, $f^{-1}(1)$ is contained in the handlebody containing $K$, then $K$ is \emph{above} $\Sigma$.

There are two obvious tubed Heegaard surfaces for any knot in bridge position: those with index $(1,3,...,2n-1)$ and $(2,4,...,2n)$. We take the convention that in the first, $K$ is below $\Sigma$, and in the second, $K$ is above $\Sigma$. Thus these will be referred to as $\Sigma_b$ and $\Sigma_a$ respectively.

If there is a genus-$g$ tubed Heegaard surface $\Sigma_0$ with index $I_0$ such that $i,j\in I_0$, compressing along the annulus $[i_1j_1]$ yields another splitting $\Sigma_1$ with index $I_1$. Considering the annulus compression as a stabilization followed by a destabilization as in the proof of Lemma \ref{lem-annulus-stabilization}, call the intermediate surface with genus $g+1$ $\Sigma_0'$. Another annulus compression along $[i_2j_2]$ so that $i_2,j_2\in I_1$ is equivalent to another stabilization-destabilization pair resulting in a surface $\Sigma_2$, with intermediate genus-$g+1$ surface $\Sigma_1'$. However, stabilizations are unique, so $\Sigma_0'$ is in fact equivalent as a Heegaard surface to $\Sigma_1'$, and so, after a sequence of handle slides followed there is a single destabilization of $\Sigma_0'$ which results in the surface $\Sigma_2$ directly. Therefore, extending this argument inductively, we have proved:

\begin{lem}Any two genus-$g$ tubed Heegaard surfaces related by a sequence of $i\neq j$ annulus compressions have a common stabilization of genus $g+1$.\label{lem-g+1}\end{lem}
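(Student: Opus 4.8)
The plan is to induct on the length $m$ of the sequence of annulus compressions, using Lemma~\ref{lem-annulus-stabilization} to replace each compression by a stabilization followed by a destabilization, and then using the uniqueness of the (single) stabilization of a Heegaard splitting to identify all of the intermediate genus-$(g+1)$ surfaces with one another. Write the sequence as $\Sigma_0 \to \Sigma_1 \to \cdots \to \Sigma_m$, where each $\Sigma_k$ is a genus-$g$ tubed Heegaard surface and $\Sigma_{k+1}$ is obtained from $\Sigma_k$ by an annulus compression $[i_{k+1}j_{k+1}]$ with $i_{k+1}\neq j_{k+1}$; by Lemma~\ref{lem-annulus-stabilization} each $\Sigma_{k+1}$ is again a tubed Heegaard surface, so this induction is well posed. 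The cases $m=0$ (where $\Sigma_0=\Sigma_m$, so any stabilization works) and $m=1$ (which is exactly Lemma~\ref{lem-annulus-stabilization}) are immediate.

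By Lemma~\ref{lem-annulus-stabilization}, for each $k$ with $0\le k\le m-1$ there is a genus-$(g+1)$ Heegaard surface $T_k$ such that $T_k$ is a stabilization of $\Sigma_k$ and $\Sigma_{k+1}$ is obtained from $T_k$ by a destabilization. Since destabilization is the inverse operation to stabilization, the statement ``$T_k$ destabilizes to $\Sigma_{k+1}$'' means precisely that $T_k$ is (isotopic to) a stabilization of $\Sigma_{k+1}$; thus each $T_k$ is simultaneously a common genus-$(g+1)$ stabilization of the consecutive pair $\Sigma_k,\Sigma_{k+1}$. The content of the lemma is to upgrade these local common stabilizations to a single global one.

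The key step is the claim that $T_0,T_1,\dots,T_{m-1}$ are all isotopic. For $1\le k\le m-1$, the surface $T_{k-1}$ is a stabilization of $\Sigma_k$ (because $T_{k-1}$ destabilizes to $\Sigma_k$), and the surface $T_k$ is a stabilization of $\Sigma_k$ by construction. Since any two single stabilizations of a fixed Heegaard splitting are isotopic --- the uniqueness of stabilization, which holds for compression-body Heegaard splittings of a compact orientable $3$-manifold with nonempty boundary such as $X_K$, and which is the principle already invoked in the discussion preceding this statement --- we conclude $T_{k-1}$ is isotopic to $T_k$. Chaining these isotopies gives that $T_0$ is isotopic to $T_{m-1}$. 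But $T_0$ is a stabilization of $\Sigma_0$ and $T_{m-1}$ is a stabilization of $\Sigma_m$, so $T_0$ is a common genus-$(g+1)$ stabilization of $\Sigma_0$ and $\Sigma_m$, as desired.

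The main obstacle is making the uniqueness-of-stabilization input precise in the right generality: one must be sure it applies to single stabilizations of compression-body splittings of a bounded manifold, and be careful that the relevant equivalence is isotopy of (unordered) splittings, so that ``$T_{k-1}$ is a stabilization of $\Sigma_k$'' and ``$T_k$ is a stabilization of $\Sigma_k$'' genuinely feed into the same uniqueness statement. A secondary, purely bookkeeping point is to confirm that the destabilization coming out of Lemma~\ref{lem-annulus-stabilization} really does exhibit $T_{k-1}$ as a stabilization of $\Sigma_k$ rather than of some other surface. The handle slides mentioned before the statement are exactly what realizes the uniqueness-of-stabilization isotopy concretely, but they need not be tracked explicitly in this argument.
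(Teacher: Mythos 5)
Your proof is correct and follows essentially the same route as the paper: replace each annulus compression by a stabilization--destabilization pair via Lemma \ref{lem-annulus-stabilization}, observe that consecutive intermediate genus-$(g+1)$ surfaces are both single stabilizations of the same $\Sigma_k$ and hence isotopic by uniqueness of stabilization, and chain these identifications inductively. Your version merely makes explicit the induction and the isotopy-chaining that the paper summarizes with ``extending this argument inductively.''
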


We also observe the following: 
\begin{lem}Two tubed Heegaard surfaces have $K$ on the same side if and only if they are related by a sequence of annulus compressions of the $[ij]$ type.\end{lem}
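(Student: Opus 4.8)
The plan is to prove both directions by keeping careful track of how the annulus compression $[ij]$ affects the "above/below" label, and conversely how any two surfaces on the same side can be connected. For the forward direction, recall from Lemma \ref{lem-annulus-stabilization} that an annulus compression $[ij]$ with $i\ne j$ is a stabilization followed by a destabilization, and in particular it does not change the genus and, more importantly, it does not change which handlebody $K$ is incident to: the annulus $\A$ runs along $K$ and lies in the complement of $\Sigma$, so the compression only reroutes the tubes $A_i, A_j$ while the portion of the bridge sphere outside the $N(x_\ell)$ — and hence its $\pm$-labelling relative to $K$ — is untouched. Concretely, using the Morse-function description of Definition \ref{dfn-above-below}: the compression $[ij]$ is supported in a neighborhood of $A_i\cup A_j\cup\A$, which is disjoint from $f^{-1}(\pm 1)$, so the statement "$f^{-1}(-1)$ lies in the handlebody containing $K$" is preserved. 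By induction on the length of the sequence, any two tubed Heegaard surfaces related by a sequence of $[ij]$-compressions have $K$ on the same side.

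For the converse, suppose $\Sigma$ and $\Sigma'$ are tubed Heegaard surfaces with $K$ on the same side. First reduce to the case where one of them is a standard surface: I claim that any tubed Heegaard surface with $K$ below is connected to $\Sigma_b$ (index $(1,3,\dots,2n-1)$) by a sequence of $[ij]$-compressions, and similarly any surface with $K$ above is connected to $\Sigma_a$. Granting this, if $\Sigma$ and $\Sigma'$ both have $K$ below, connect each to $\Sigma_b$ and concatenate (reversing one sequence — note that the inverse of an $[ij]$-compression is again realized by an annulus compression of the $[ij]$ type, by Definition \ref{dfn-compressing-annulus-runs-along} applied to the newly created annuli, since $\sigma$ is an involution in Lemma \ref{lem-annulus-stabilization}); similarly if both have $K$ above. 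To prove the claim, I would argue directly on the index: given a tubed Heegaard surface with index $I_0$ and $K$ below, I want to find a compression $[ij]$ (with $i,j\in I_0$, $i\ne j$) whose effect on the index, namely replacing $j$ by $r(i)$, strictly decreases some complexity measured against the target index $(1,3,\dots,2n-1)$, e.g. the sum $\sum_{\ell\in I_0}\ell$ reduced appropriately modulo the annulus lengths, or more robustly the total length $\sum_i (r(i)-i \bmod 2n)$ of all the annuli. Since $K$ lies below, one checks that $\Sigma_b$ is the unique tubed Heegaard surface of minimal total annulus length on that side, and any non-minimal configuration admits a length-reducing $[ij]$-compression; iterating terminates at $\Sigma_b$.

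The main obstacle I anticipate is the claim that every same-side surface reduces to the standard one via $[ij]$-compressions — this requires verifying that a length-reducing (or complexity-reducing) compression of type $[ij]$, with both feet constrained to lie in the current index $I_0$ and with $i\ne j$, is always available when the surface is not already standard, and that the $i=j$ degeneracy (which disconnects the surface, as noted after Lemma \ref{lem-annulus-stabilization}) can always be avoided. I expect this to follow from a short combinatorial argument: the indices of a tubed Heegaard surface on a fixed side of $K$ partition the $2n$ points into "left feet" and "right feet" in a constrained alternating-type pattern dictated by the side, and any deviation from the standard alternating pattern $(1,3,\dots,2n-1)$ produces two annuli that can be recombined via a legal $[ij]$-compression reducing total length. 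Once this combinatorial lemma is in hand, both directions of the equivalence follow as outlined.
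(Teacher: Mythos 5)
Your forward direction is essentially correct and matches the paper in spirit: the compression is supported away from the punctured-sphere part of $\Sigma$, so the side of $K$ is unchanged (the paper phrases this as a parity count $\nu'=\nu+2m$ along a path from $K$ to $f^{-1}(-1)$), and induction on the length of the sequence finishes it. The problem is the converse, and while you correctly flag it as the obstacle, the gap is more serious than the ``short combinatorial argument'' you anticipate: the set of legal moves is \emph{not} determined by the index alone. Not every pair $i\neq j$ with $i,j\in I_0$ is realized by an embedded compressing annulus running along $K$ in the complement of $\Sigma$, and the formal move ``replace $j$ by $r(i)$'' applied to a non-realizable pair need not even preserve the side of $K$. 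Concretely, for $n=3$ take $I_0=(1,2,3)$, whose tubes are $1\to 6$, $2\to 5$, $3\to 4$; the profile of tube-counts over the consecutive bridge arcs is $(1,2,3,2,1,0)$, which agrees mod $2$ with the profile $(1,0,1,0,1,0)$ of $\Sigma_b$, so $K$ is below. The formal move $[1\,2]$ replaces $2$ by $r(1)=6$, giving index $(1,3,6)$ with profile $(2,1,2,1,0,1)$ --- off by an odd number over every arc, so that surface has $K$ \emph{above}. Hence $[1\,2]$ is not realizable here (near $x_2$ such an annulus would have to cross $A_1$, which passes through the puncture $N(x_2)$ inside the foot of $A_2$), and any index-only search for a complexity-reducing move will propose illegal moves unless you first characterize realizability geometrically. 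Your reduction to $\Sigma_b$ also silently requires that the inverse of an $[ij]$-compression is again an $[ij]$-compression, which you assert in passing but do not justify.

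The paper sidesteps both issues by arguing directly between the two given surfaces rather than through a normal form: for each consecutive bridge arc $\gamma_i$ it records the numbers $p_i,q_i$ of annuli of $\Sigma_1,\Sigma_2$ running over $\gamma_i$, notes that $K$ being on the same side forces $p_i-q_i$ to be even while $p_{i+1}-p_i=\pm 1$ (and likewise for $q$), and then reads off from the jumps of $p_i-q_i$ the feet of an explicit collection of compressing annuli carrying $\Sigma_1$ to $\Sigma_2$. If you want to rescue the normal-form strategy, you would need a lemma characterizing exactly which $[ij]$ are realizable for a given nesting pattern of tubes and showing that a realizable, length-reducing one always exists away from $\Sigma_b$; the difference-profile construction is the shorter path.
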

\begin{proof}
First, we show that an annulus compression does not change which side of the surface the knot lies on. Pick a point on $K$, and consider a path $\gamma$ transverse to $\Sigma$ from this point to $f^{-1}(-1)$, where $f$ is the fixed Morse function defined above. Let $\nu=|\gamma\cap\Sigma|$. $\nu=0\mod 2$ if and only if $K$ is below $\Sigma$. Let $\Sigma'$ be the resullt of performing the annulus compression along $[ij]$ on $\Sigma$, and $\nu'=|\gamma\cap\Sigma'|$. If $|\gamma\cap[ij]|=m$, then $\nu'=\nu+2m$, and so $K$ is on the same side of $\Sigma'$ as of $\Sigma$.

Conversely, suppose without loss of generality that $\Sigma_1$ and $\Sigma_2$ are two tubed Heegaard surfaces with $K$ above both. If $S$ is the unique bridge sphere that $\Sigma_i$ are constructed from, and $\gamma_1,...,\gamma_{2n}$ consecutive bridge arcs of $K$ with respect to $S$, we let $p_i$ (resp. $q_i$) be the number of annuli running over $\gamma_i$ in $\Sigma_1$ (resp. $\Sigma_2$). Since $K$ is above both $\Sigma_1$ and $\Sigma_2$, $p_i-q_i=(0\mod 2)$ for all $i$. Also note that $p_{i+1}=p_i\pm 1$, and similarly for $q_i$, so $\Delta=(p_{i+1}-q_{i+1})-(p_i-q_i)$ is either $0$ or $\pm 2$. Therefore, we can construct a collection of compressing annuli that will transform one surface into the other: if $\Delta=0$, then the number of compressing annuli over $\gamma_i$ is equal to the number passing over $\gamma_{i+1}$. Otherwise, a compressing annulus has either a left or right foot at $\partial N(\gamma_i\cap\gamma_{i+1})$ for $\Delta=-2$ or $2$ respectively. This gives a way to construct a sequence of compressing annuli which takes $\Sigma_1$ to $\Sigma_2$. 

\end{proof}

Thus we have proved the main result of this section:
\begin{lem}
Given two minimal genus tubed Heegaard surfaces for $X_K$ where $K$ is an $n$-bridge knot with bridge distance at least $2n$, if $K$ lies above (resp. below) both surfaces, then they become equivalent after one stabilization.
\end{lem}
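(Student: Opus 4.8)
The plan is to simply assemble the results already proved in this section; there is almost nothing new to do. First I would recall, from Theorem~\ref{maggy-10.3} and the discussion following it, that since $d(K)>2n$ every minimal genus Heegaard surface of $X_K$ is a tubed Heegaard surface, and that every tubed Heegaard surface of $X_K$ has genus exactly $n$: it is obtained from the unique $n$-bridge sphere $S$ by $n$ meridional stabilizations, each of which raises the genus by $1$ (Lemma~\ref{lem-merid-stab}). So let $\Sigma_1,\Sigma_2$ be two genus-$n$ tubed Heegaard surfaces with $K$ above both; the ``below'' case is obtained by interchanging the labels $+$ and $-$ on the two sides of $S$ in Definition~\ref{dfn-above-below}, hence is identical.

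Next I would invoke the preceding lemma: because $K$ lies on the same side of $\Sigma_1$ and of $\Sigma_2$, the two surfaces are related by a finite sequence of annulus compressions of the $[ij]$ type. By the convention recorded just after Lemma~\ref{lem-annulus-stabilization} — and because an annulus compression with $i=j$ disconnects the surface — we may and do take every compression in this sequence to have $i\neq j$. By Lemma~\ref{lem-annulus-stabilization} each such compression is a stabilization followed by a destabilization, so it preserves the genus; hence every surface appearing in the sequence is again a genus-$n$ tubed Heegaard surface.

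Then I would apply Lemma~\ref{lem-g+1} verbatim to this sequence: any two genus-$n$ tubed Heegaard surfaces related by a sequence of $i\neq j$ annulus compressions admit a common stabilization of genus $n+1$. Since the Heegaard genus of $X_K$ equals $n$ by Theorem~\ref{mainthm} and both $\Sigma_1,\Sigma_2$ have genus $n$, a common stabilization of genus $n+1$ is exactly one stabilization performed on each of $\Sigma_1$ and $\Sigma_2$. This is the assertion of the lemma, completing the proof.

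The substantive content of this statement lives entirely in Lemma~\ref{lem-annulus-stabilization}, Lemma~\ref{lem-g+1}, and the side lemma; the only points that merit a sentence of care are (i) that the sequence of annulus compressions furnished by the side lemma can be arranged to consist solely of $[ij]$ moves with $i\neq j$, so that Lemma~\ref{lem-g+1} applies and the genus is preserved at every intermediate stage, and (ii) that ``common stabilization of genus $g+1$'' in Lemma~\ref{lem-g+1} means precisely ``one stabilization to each surface,'' which matches the wording of the statement. I do not anticipate any genuine obstacle beyond making these two bookkeeping remarks explicit.
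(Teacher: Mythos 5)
Your proposal is correct and is exactly the argument the paper intends: the lemma is stated there as a summary (``Thus we have proved the main result of this section'') obtained by combining the same-side-iff-annulus-compressions lemma with Lemma~\ref{lem-g+1}, which is precisely what you do. Your two bookkeeping remarks (restricting to $i\neq j$ compressions and reading ``common stabilization of genus $g+1$'' as one stabilization of each genus-$n$ surface) are the only points needing care, and you handle both.
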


%%%%%%%%%%%%%%%%%%%%%%%%%%%%%%%%%%%%%%%%%%%%%%%%%%%%%%%%%%%%%%%%%
%%%%%%%%%%%%%%%%%%%%%%%%%%%%%%%%%%%%%%%%%%%%%%%%%%%%%%%%%%%%%%%%%

\section{Sweep-outs and the graphic}

In this section, we recall the machinery developed in \cite{joh08} and used in \cite{joh10} to construct Heegaard splittings with large stable genus, and modify it for application in this situation.%After establishing some preliminaries, in sections 4.2 and 4.3 we show that there are two situations which give bounds on the Euler characteristic of one splitting surface with respect to either the Euler characteristic or distance of another splitting surface. In section 4.4, we prove a lemma that will allow us to place a common stabilization of two opposite-sided tubed Heegaard surfaces in one of these two situations, and in section 4.5 we show that any surface isotopic to a common stabilization must be in one of the two aforementioned situations, allowing us to bound its Euler characteristic from above. 

A $(g,n)$-bridge surface $S$, with $n>0$, in $(S^3,K)$ splits the 3-sphere into two handlebodies $H^\pm$, each containing a boundary-parallel collection of $n$ properly embedded arcs, $\tau^\pm$. If we restrict $S$ to $X_K$, it splits $X_K$ into two handlebodies with neighborhoods of the $n$ arcs removed (we note that this is homeomorphic to a genus-$g+n$ handlebody). To begin generalizing this machinery to punctured handlebodies, we define a \emph{spine} for $H^+\setminus\eta(\tau^+)$ (or $H^-\setminus\eta(\tau^-)$) to be a graph embedded in $H^+\setminus\eta(\tau^+)$ which contains the spine of $H^+$, together with an edge along a meridian circle on the boundary of each arc of bridge, and a vertical edge from a vertex of the spine of $H^+$ to each of the meridian circles. Note that a regular neighborhood of the spine on either side of $S$ in $X_K$ has boundary which is equivalent to $S-\eta(K)$, and that $H^\pm\setminus\eta(\tau^\pm)$ deformation retracts onto its spine. In the case that $g=0$ and $H^\pm$ are 3-balls, the spine of $H^+$ should be chosen to be a single vertex. We will call this the \emph{central vertex}.

Let $\partial_-W$ be some collection of closed positive-genus surfaces. The compression body $W$ is formed by attaching 1-handles to $\partial_-W\times I$. Then a \emph{spine of $W$} is a graph with valence-1 vertices on the components of $\partial_-W$ union $\partial_-W$, such that $W$ deformation retracts onto this union of graph and surfaces. If $W$ is a handlebody, then $\partial_-W=\emptyset$ and the spine is just a graph which $W$ is a regular neighborhood of. For a Heegaard splitting $(\Sigma, W_1, W_2)$ of some manifold $M$, where either $W_i$ could be a compression body, $\Sigma$ is equivalent to the positive boundary $\partial_+W_i$ of a regular neighborhood in $X_K$ of either the spine of $W_1$ or $W_2$. 

A \emph{splitting surface} in $X_K$ is either a bridge surface for $(S^3,K)$ restricted to $X_K$, or a Heegaard surface for $X_K$. A \emph{sweep-out} for $X_K$ is a map $f:S^3\setminus\eta(K)\rightarrow[-1,1]$ such that for each $t\in(-1,1)$, $f^{-1}(t)$ is equivalent to some splitting surface $\Sigma$, and $f^{-1}(\pm1)$ is the two spines to either side of $\Sigma$. We say that $f$ is a \emph{sweep-out given by $\Sigma$}.

Let $f$ be a sweep-out of $X_K$ given by a tubed Heegaard surface $\Sigma$, and $g$ a sweep-out of $X_K$ given by the bridge sphere $S$. Given the map $f\times g:X_K\rightarrow[-1,1]^2=[-1,1]\times[-1,1]$, the \emph{graphic} of $f\times g$ is the set of points $(s,t)\subset[-1,1]^2$ for which $f^{-1}(s)=\Sigma_s$ is not transverse to $g^{-1}(t)=S_t$.   For a generic $f\times g$, the graphic $\Gamma$ is a $2$- and $4$-valent graph subset of $[-1,1]^2$ (with for one exception, noted below), with a finite number of points along any vertical or horizontal line $\{s\}\times[-1,1]$ or $[-1,1]\times\{t\}$, and no two vertices sharing an $s$ or $t$ coordinate. See \cite{joh08} for details. We note that at some subset of the four corners of the square, the spine corresponding to $\Sigma$ is not transverse to the spine corresponding to $S$ on the boundary of $X_K$. Generically, the closure of these points does not intersect the interior of the square, so their existence will not affect our arguments, so we ignore these points. 

Let $\Theta_s^{-}=f^{-1}([-1,s])$ and $\Theta_s^+=f^{-1}([s,1])$. We say that $S_t$ is \emph{mostly above} $\Sigma_s$ if each component of $S_t\cap \Theta_s^-$ is contained in a disk subset of $S_t$, or in an annulus of $S_t$ which has one boundary component in $\partial X_K$. Similarly $S_t$ is \emph{mostly below} $\Sigma_s$ if each component of $S_t\cap\Theta_s^+$ is contained in a disk subset of $S_t$ or in an annulus of $S_t$ which has one boundary component in $\partial X_K$. If there exist $t_0, t_1$ and $s$ such that $S_{t_0}$ is mostly below $\Sigma_s$ and $S_{t_1}$ is mostly above $\Sigma_s$, we say that $\Sigma$ \emph{spans} $S$. If $t_0<t_1$, then $\Sigma$ spans $S$ \emph{positively}, and if $t_1<t_0$, $\Sigma$ spans $S$ \emph{negatively}. Note that it is possible for $\Sigma$ to span $S$ both positively and negatively.

If there is some value of $s$ so that $\{s\}\times [-1,1]$ contains no vertices of $\Gamma$ and no points where $S_t$ is either mostly above or mostly below $\Sigma_s$, for all values $t\in[-1,1]$, then we say that $\Sigma$ \emph{splits} $S$. Note that in this case, for this value of $t$, every $S_t\cap \Sigma_s$ contains at least one loop that neither bounds a disk in $S_t$ nor is parallel to $\partial S_t$. A curve with these properties is called \emph{essential} in $S_t$.

If an essential curve on $\Sigma$ bounds an embedded disk $D$ in $X_K\setminus\Sigma$, this disk is a \emph{compressing disk} for $\Sigma$. If the curve bounds an once-punctured disk whose other boundary component is a meridian of $\partial X_K$, this once-punctured disk is a \emph{cut disk}. A \emph{compression} of a connected surface $\Sigma$ along the compressing disk $D$ is a surface equivalent to the set of boundary components of $N(\Sigma\cup D)$ which are not isotopic to $\Sigma$. A \emph{cut compression} of a connected surface $\Sigma$ along the cut disk $D$ is the set of boundary components of $N(\Sigma\cup D)$ which meet $N(D)$. A \emph{c-compression} along a \emph{c-disk} is either a compression along a compressing disk or a cut compression along a cut disk. 

The following lemma is a version of \cite[Theorem 3.1]{joh10}, and follows the same line of proof. We write it out in full to show how it adapts to the current situation.

\begin{lem} Let $\Sigma$ and $S$ be two splitting surfaces for $X_K$. Suppose $\Sigma$ spans $S$ both positively and negatively. Then there is a sequence of compressions and cut compressions of $\Sigma$ after which there are two components of the compressed surface which are parallel to $S$. \label{lem-span-pos-neg}\end{lem}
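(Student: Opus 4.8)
The plan is to follow Johnson's argument from \cite[Theorem 3.1]{joh10} in the punctured-handlebody setting, using the Rubinstein--Scharlemann graphic $\Gamma$ of $f\times g$, where $f$ is a sweep-out given by $\Sigma$ and $g$ one given by $S$. Since $\Sigma$ spans $S$ positively, there are values $s_0$ and $t_0<t_1$ with $S_{t_0}$ mostly below $\Sigma_{s_0}$ and $S_{t_1}$ mostly above $\Sigma_{s_0}$; since $\Sigma$ also spans $S$ negatively, there are $s_1$ and $t_2<t_3$ with $S_{t_3}$ mostly below $\Sigma_{s_1}$ and $S_{t_2}$ mostly above $\Sigma_{s_1}$. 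The idea is to look at the regions of $[-1,1]^2\setminus\Gamma$ labeled by ``mostly above'' and ``mostly below'' (with the convention that on the boundary of the square the appropriate spine condition is used), and to track how these labels can and cannot be adjacent across the graphic. First I would establish the local moves: across a crossing edge of $\Gamma$ the surface $S_t$ changes by an isotopy past a single critical point of $f|_{S_t}$, i.e.\ by attaching or removing a band, so the ``mostly above'' and ``mostly below'' labels are locally well-behaved (each is a union of regions, and an edge separating the two labels records a spanning phenomenon). A region can fail to carry either label; these are exactly the $s$ for which $\Sigma_s$ comes close to splitting $S$.

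Next I would run the combinatorial core of Johnson's argument. Because $\Sigma$ spans $S$ both positively and negatively, one can find in the square a path (or rectangle) of $s$-values connecting a configuration where $S_{t}$ goes from mostly below to mostly above as $t$ increases, to one where it goes from mostly above to mostly below as $t$ increases. Somewhere along the way, either (i) for some $s$ the vertical line $\{s\}\times[-1,1]$ has no region labeled ``mostly above'' or ``mostly below'' at the relevant heights --- which would say $\Sigma$ \emph{splits} $S$, and then a standard argument produces the desired parallel copies after compressions --- or (ii) one finds an $s$ at which $S_t$ is simultaneously ``mostly above'' for some $t$ and ``mostly below'' for another $t$ with the orders reversed, forcing a value of $t$ at which $S_t\cap\Theta_s^-$ and $S_t\cap\Theta_s^+$ are both inessential (disks or $\partial$-parallel annuli) in $S_t$. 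In that case every curve of $S_t\cap\Sigma_s$ bounds a disk or cut disk on one side or the other, and by an innermost-disk argument one compresses and cut-compresses $\Sigma_s$ along these disks. After all such c-compressions, $\Sigma_s$ becomes a collection of surfaces each of which is isotopic into a level set of $g$, i.e.\ each component is parallel to $S$ or is inessential; since the original configuration had $S_t$ both mostly above and mostly below, two of these components must in fact be parallel copies of $S$, one on each side.

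The step I expect to be the main obstacle is handling the cut disks and the boundary $\partial X_K$ correctly throughout. In Johnson's closed setting every curve of $S_t\cap\Sigma_s$ that fails to be essential bounds an honest disk; here it may instead bound a once-punctured disk (a cut disk), because $S$ and $\Sigma$ meet the knot complement's boundary torus in meridians. I would need to verify that (a) the definitions of ``mostly above/below'' with the annulus-with-one-boundary-in-$\partial X_K$ exception are preserved under the band moves across $\Gamma$, (b) the innermost-disk induction still terminates when c-disks rather than just compressing disks are used (this is why the statement allows both compressions and cut compressions), and (c) the final count of parallel copies is exactly two and not more, which follows from the fact that any additional parallel copy would let one isotope $S$ off itself, contradicting that $f^{-1}(t)$ is connected for $t\in(-1,1)$. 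A secondary technical point is the behavior at the four corners of the square, where spines rather than surfaces appear; as noted in the text, the closure of the non-transversality locus there misses the interior, so these corner phenomena do not interfere with the argument, but I would state this carefully. Apart from these bookkeeping issues, the proof is a faithful adaptation of \cite[Theorem 3.1]{joh10} and \cite{joh08}, and I would cite those for the parts that go through verbatim.
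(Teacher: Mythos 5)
Your proposal is modeled on the right source (Johnson's spanning argument) and correctly identifies the main adaptation needed here, namely handling cut disks and the boundary torus; the paper's Cases 1 and 2 for removing inessential intersections do exactly the bookkeeping you flag in (a) and (b). But the core of your argument does not actually arrive at the configuration that produces two parallel copies of $S$. The paper's proof extracts from ``spans both positively and negatively'' a \emph{single} value $s$ together with three values $t_-<t_0<t_+$ such that $S_{t_-}$ and $S_{t_+}$ are mostly below $\Sigma_s$ while $S_{t_0}$ is mostly above. It then c-compresses $\Sigma_s$ until it is disjoint from all three level spheres, so that the compressed surface separates $S_{t_-}$ from $S_{t_0}$ and also $S_{t_0}$ from $S_{t_+}$; this forces at least one component in each of the two product regions $S\times I$, which is where the count of \emph{two} comes from. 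Your combinatorial dichotomy instead lands either on splitting or on a single $t$ at which $S_t\cap\Theta_s^-$ and $S_t\cap\Theta_s^+$ are both inessential --- i.e.\ $S_t$ simultaneously mostly above and mostly below $\Sigma_s$. That is a different phenomenon (it is what occurs at certain vertices of the graphic in the distance bounds), and it does not by itself yield two components parallel to $S$ on opposite sides. Relatedly, your justification that the count is ``exactly two and not more'' is answering a question the lemma does not ask; the statement only requires \emph{at least} two, and the reason is the separation argument above, not an isotopy of $S$ off itself.

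The second gap is the final identification of the surviving components with parallel copies of $S$. You assert that after all c-compressions each component ``is isotopic into a level set of $g$,'' but this is the step that requires proof: the paper maximally compresses and $\partial$-compresses the surface in the complement of the three level spheres, observes that each resulting incompressible, $\partial$-incompressible component lives in a copy of $S\times I$ (a $2n$-punctured sphere cross interval) and separates its two ends, shows that such a component must meet each annulus $\partial S\times I$ in exactly one meridian loop (else an innermost-arc argument gives a compression or $\partial$-compression), and then caps off with $D^2\times I$'s to reduce to a separating incompressible surface in $S^2\times I$, hence a sphere, hence a $2n$-punctured sphere parallel to $S$. None of this classification appears in your sketch, and it is precisely where the $\partial$-compressions and the meridional boundary behavior --- the issues you correctly worried about in the abstract --- actually get used.
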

\begin{proof}
Let $t_-, t_0, t_+$ and $s$ be the values such that $S_{t_-}$ and $S_{t_+}$ are mostly below $\Sigma_s$, and $S_{t_0}$ is mostly above $\Sigma_s$. Every curve of $S_{t_-}\cap \Sigma_s$ is inessential in $S_{t_-}$. First, we claim that if, for some $t$, an innermost curve $\gamma\in S_{t}\cap \Sigma_s$ is also inessential in $\Sigma_s$, then $\gamma$ can be removed by isotopy of the surfaces.
 
\emph{Case 1: }If $\gamma$ is contained in a disk subset of $S_t\cap\Sigma_s$, then by the Jordan Curve Theorem $\gamma$ must itself bound a disk in $S_t$. If $\gamma$ also bounds a disk in $\Sigma_s$, then since $X_K$ is irreducible, the two disks cobound a ball across which $\Sigma_s$ can be isotoped. Note that in this case, $\gamma$ cannot bound an annulus in $\Sigma_s$, since this annulus together with the disk would form a sphere in $S^3$ punctured exactly once by $K$, but since $S^3$ contains no copies of $S_1\times S_2$ this is a contradiction.

\emph{Case 2: }If $\gamma$ is contained in an annulus subset of $S_t$, it bounds either a disk or an annulus in $S_t$. If it bounds a disk, we revert to Case 1. If it bounds an annulus, by the same argument as in Case 1 we note that $\gamma$ must also bound an annulus in $\Sigma_s$. If we replace $N(K)$ (i.e. perform a trivial Dehn filling) $\gamma$ bounds two disks in $S^3$, so the curve of intersection in $X_K$ can be removed by sliding the two boundary components of the annuli in $\partial X_K$ past each other. 

After removing all curves of intersection which are inessential in both surfaces, we consider a curve $\gamma\in S_{t_-}\cap \Sigma_s$ which is innermost in $S_{t_-}$ and essential in $\Sigma_s$. Therefore this curve bounds a $c$-disk for $\Sigma_s$, along which we compress $\Sigma_s$. We repeat this process with the compressed surface until there are no more curves of intersection of $S_{t_-}$ and the resulting compressed, possibly disconnected surface, which we will call $\Sigma_s'$. Note that now, $S_{t_-}$ lies entirely to one side of $\Sigma_s'$. 

We repeat the same process with all curves of intersection between $\Sigma_s'$ ad $S_{t_+}$, and call the resulting compressed surface $\Sigma_s''$, and observe that now $S_{t_-}$ and $S_{t_+}$ are both on the same side of $\Sigma_s''$, as they were both mostly below $\Sigma_s$. Finally, we repeat the process with all curves of intersection between $S_{t_0}$ and $\Sigma_s''$, resulting in a surface $\Sigma_s'''$ now disjoint from all three surfaces $S_{t_-}$, $S_{t_+}$ and $S_{t_0}$, with $S_{t_-}$ and $S_{t_+}$ to one side of $\Sigma_s'''$, and $S_{t_0}$ on the other side of $\Sigma_s'''$. 

Now we maximally compress and $\partial$-compress $\Sigma_s'''$ in the complement of $S_{t_-}\cup S_{t_0}\cup S_{t_1}$, and call the resulting surface $F$. $F$ is an incompressible surface properly embedded in a manifold homeomorphic to $S\times [0,2]$, a $2n$-punctured sphere cross interval. $F$ separates $S\times\{0\}$ from $S\times\{1\}$, and separates $S\times\{1\}$ from $S\times\{2\}$. Therefore $F$ has at least two incompressible, $\partial$-incompressible components, each inside a copy of $S\times I$ separating $S\times\{0\}$ from $S\times\{1\}$. We now show that each such component must be parallel to $S$. 

Let $F'$ such a component of $F$ in $S\times I$. If there is a component of $\partial S\times I$ which does not contain a loop of $\partial F'$ which separates $\partial S\times\{0\}$ from $\partial S\times\{1\}$, then $F'$ does not separate $S\times\{0\}$ from $S\times\{1\}$, so $\partial F'$ must meet each component of $\partial S\times I$ in a meridian circle of that component. If some component of $\partial S\times I$ meets $\partial F'$ in more than one meridian component, then by an innermost-arc argument there is either a compression or a $\partial$-compression of $F'$, a contradiction, so $F'$ meets each component of $\partial S\times I$ in exactly one meridian loop. Therefore we may glue in $2n$ copies of $D^2\times I$ into $S\times I$, and extend $F'$ by gluing in disks along those meridian loops of intersection. Then this surgered $F'$ becomes a separating, incompressible surface in $S^2\times I$, so the surgered $F'$ must be homeomorphic to $S^2$. 

Therefore $F'$ is homeomorphic to a $2n$-punctured sphere and was parallel to a copy of $S$, so $F$ contains at least two components which are copies of $S$.
\end{proof}

\begin{cor}
Let $K$ be an $n$-bridge knot. If $\Sigma$ is a Heegaard splitting for $X_K$ which spans a splitting sphere $S$ both positively and negatively, then the genus of $\Sigma$ is at least $2n-1$.\label{cor-2n-1}
\end{cor}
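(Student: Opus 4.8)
The plan is to read the bound off Lemma \ref{lem-span-pos-neg} by means of an Euler characteristic count. Apply that lemma to $\Sigma$ and the splitting sphere $S$: it produces a surface $\Sigma^{*}$, obtained from $\Sigma$ by a sequence of compressions and cut compressions, at least two of whose components are parallel to $S$. Since $K$ is $n$-bridge and $S$ is a bridge sphere for it, $S$ is a $2n$-punctured sphere, so $\chi(S)=2-2n$ and these two components alone contribute $4-4n$ to $\chi(\Sigma^{*})$.

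Next I would track how $\chi$ changes. A compression of a surface along a compressing disk raises its Euler characteristic by $2$ (delete an annular neighbourhood of the curve and glue in two disks), while a cut compression leaves $\chi$ unchanged: cutting along a curve preserves $\chi$, and each of the two once-punctured disks glued back in has $\chi=0$ and is attached along a single circle. Hence $\chi(\Sigma^{*})\geq\chi(\Sigma)=2-2g$, where $g=g(\Sigma)$.

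To turn this into an upper bound for $g$, I need $\Sigma^{*}$ to have no component of positive Euler characteristic, i.e.\ no $2$-sphere and no disk component. As in the proof of Lemma \ref{lem-span-pos-neg}, I may take $\Sigma^{*}$ to be the incompressible, $\partial$-incompressible surface $F$ lying in the product region $S\times[0,2]$ (a $2n$-punctured sphere crossed with an interval), with trivial components removed along the way. A $2$-sphere component bounds a ball, since $X_{K}$ is irreducible. A disk component $D'$ has $\partial D'$ a single circle in one of the $2n$ annuli of $\partial S\times[0,2]$; if $\partial D'$ is inessential there then $D'$ is $\partial$-parallel and is removed, and if $\partial D'$ is essential there it is isotopic to a meridian of $K$, so capping $D'$ off with a meridian disk of $\eta(K)$ would produce a $2$-sphere in $S^{3}$ meeting $K$ in exactly one point, which is impossible. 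Thus every component of $\Sigma^{*}$ has $\chi\leq 0$, with two of them equal to $2-2n$, so $\chi(\Sigma^{*})\leq 4-4n$. Combining with the previous step gives $2-2g\leq 4-4n$, i.e.\ $g\geq 2n-1$.

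The two Euler characteristic computations are routine; the step needing the most care is the bookkeeping across the whole compression/cut compression process, in particular removing each trivial $2$-sphere or disk component at the instant it is created so that each removal is paired with the compression that produced it and the inequality $\chi(\Sigma^{*})\geq\chi(\Sigma)$ is preserved. That, together with irreducibility of $X_{K}$ and the nonexistence of a $2$-sphere in $S^{3}$ meeting $K$ once, is what makes the argument go through.
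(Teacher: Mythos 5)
Your proof is correct and follows essentially the same route as the paper: apply Lemma \ref{lem-span-pos-neg} to obtain two components parallel to $S$, note that $\chi$ is non-decreasing under compressions and cut compressions, and conclude $2-2g\leq 2\chi(S)=4-4n$. The paper's own proof is just this two-line Euler characteristic count; your additional bookkeeping (ruling out sphere and disk components so the remaining pieces contribute nonpositively to $\chi$) fills in a step the paper leaves implicit, and it is sound.
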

\begin{proof}
By Lemma \ref{lem-span-pos-neg}, there is a sequence of compressions and cut compressions of $\Sigma$ after which the resulting collection of surfaces contains two copies of $S$. Since $\chi(\Sigma)$ is non-decreasing under compressions and cut compressions, $\chi(\Sigma)\leq 2\chi(S)\Rightarrow 2-2g\leq 4-4n\Rightarrow g\geq 2n-1$
\end{proof}

%\subsection{Splitting sweep-outs}

The following lemma is a version of \cite[Theorem 4.2]{joh10}. The proof follows identical arguments as in \cite{joh08} and \cite{joh10}, so we only sketch details here, and direct the reader there for details. It is generalized to splitting surfaces the same way as Lemma \ref{lem-span-pos-neg} above. 

\begin{lem}If $S$ and $\Sigma$ are splitting surfaces for $X_K$, $\chi(\Sigma)<0$, and $\Sigma$ splits $S$, then $d(S)\leq 2-\chi(\Sigma)$.\label{lem-split}
\end{lem}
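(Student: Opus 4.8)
The plan is to mimic the Rubinstein–Scharlemann/Johnson argument from \cite{joh08, joh10}, recast for the punctured-handlebody setting exactly as in the proof of Lemma \ref{lem-span-pos-neg}. Fix generic sweep-outs $f$ given by $\Sigma$ and $g$ given by $S$, and form the graphic $\Gamma\subset[-1,1]^2$ of $f\times g$. The hypothesis that $\Sigma$ splits $S$ gives a vertical line $\{s_0\}\times[-1,1]$ along which, for every $t$, the intersection $S_t\cap\Sigma_{s_0}$ contains an essential curve in $S_t$, and $S_t$ is never mostly above or mostly below $\Sigma_{s_0}$. The idea is to run a ``downward'' argument: for $t$ near $-1$, $S_t$ is a small surface near the lower spine, so $S_t$ is mostly below (equivalently mostly above, by the same disk/annulus criterion) $\Sigma_{s_0}$ — contradicting the splitting hypothesis unless something essential happens. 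More precisely, one shows by the standard innermost-disk sweep that along $\{s_0\}\times[-1,1]$ one can produce a compressing or cut disk for $\Sigma_{s_0}$ coming from $S_t$, and then track how the essential curves of intersection behave as $t$ varies.

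The key steps, in order: First, establish the labelling: for each $t$ with $\{s_0\}\times\{t\}\notin\Gamma$, because $S_t$ is neither mostly above nor mostly below $\Sigma_{s_0}$, some curve of $S_t\cap\Sigma_{s_0}$ is essential in $S_t$; pick an innermost such curve in $\Sigma_{s_0}$ and observe it bounds a $c$-disk (compressing or cut disk) for $S_t$ on one side. Second, define region labels $A$ and $B$ on $\{s_0\}\times[-1,1]$ according to whether $\Theta_{s_0}^-$ or $\Theta_{s_0}^+$ contains such a $c$-disk for $S_t$; these labels are well-defined on the complement of finitely many points (where $\{s_0\}\times\{t\}\in\Gamma$) and they change only across such points. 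Third, at the ends: near $t=-1$ every component of $S_t$ is trivial in $S$, so any essential-in-$\Sigma_{s_0}$ curve of intersection bounds a disk in $S_t$ on the side toward $g^{-1}(-1)$, giving label $A$ (say); symmetrically near $t=1$ the label is $B$. Fourth, since the labels switch, there is a value $t^\ast$ where $\{s_0\}\times\{t^\ast\}\in\Gamma$ and both labels are realized near $t^\ast$; this is exactly a place where $S_{t^\ast}$ simultaneously c-compresses into $\Theta_{s_0}^-$ and into $\Theta_{s_0}^+$, i.e. $S$ is ``spanned'' by $\Sigma$ in the sense that pieces of $S$ lie essentially on both sides. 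Fifth, extract from this a sequence of c-compressions of $S$ pushing it off $\Sigma_{s_0}$ to both sides, and count Euler characteristic: the c-compressed pieces of $S$ give curves on $\Sigma_{s_0}$ realizing a short path in the curve complex $C(S)$ between a disk-bounding curve on one side and one on the other, bounding $d(S)$ by the number of c-compressions, which is controlled by $-\chi(\Sigma)$. Specifically, as in \cite[Theorem 4.2]{joh10}, one gets $d(S)\le 2-\chi(\Sigma)$.

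The technical heart — and the step I expect to cause the most trouble — is the Euler characteristic bookkeeping at the crossing value $t^\ast$: one must argue that the c-disks for $S_{t^\ast}$ appearing on the two sides can be chosen disjoint from each other, or made disjoint after isotopy, so that successive c-compressions of $S_{t^\ast}$ genuinely produce a surface disjoint from $\Sigma_{s_0}$ on each side, and that each such c-compression contributes a definite amount to the distance count while consuming at most the available negative Euler characteristic of $\Sigma_{s_0}$. As in \cite{joh08}, this requires the usual ``no canceling pair at a valence-4 vertex'' analysis of the graphic, together with the observation that, since $\Sigma_{s_0}$ is a single connected surface, the curves bounding the c-disks on $S_{t^\ast}$ form nested or disjoint families on $\Sigma_{s_0}$, so compressing along all of them changes $\chi$ by at most $2-\chi(\Sigma)$ and yields, on $\Sigma_{s_0}$, an edge-path in $C(S)$ of length at most $2-\chi(\Sigma)$ joining the two disk sets $K_1, K_2$. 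The remaining routine points — transversality of the spines at the corners being ignorable, irreducibility of $X_K$ used to discard trivial intersections exactly as in Lemma \ref{lem-span-pos-neg} — go through verbatim, so I would state them briefly and refer the reader to \cite{joh08} and \cite{joh10} for the full combinatorial details.
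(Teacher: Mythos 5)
There is a genuine gap, and it sits at the very first step of your labelling scheme. You assert that for each $t$ an essential-in-$S_t$ curve of $S_t\cap\Sigma_{s_0}$, taken ``innermost in $\Sigma_{s_0}$,'' bounds a c-disk for $S_t$. But a curve that is essential in $S_t$ may perfectly well be essential in $\Sigma_{s_0}$ too, in which case it bounds no disk in $\Sigma_{s_0}$ and ``innermost'' is not even defined for it; and even when it is inessential in $\Sigma_{s_0}$, the disk it bounds there may contain further curves of intersection that are inessential in $S_t$, so it is not a c-disk with interior disjoint from $S_t$. Consequently there is, in general, a whole interval of $t$-values that receive no label at all, and your parity/continuity argument for the existence of a crossing value $t^\ast$ collapses. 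A second symptom that something is wrong: if your argument did go through, the two c-disks produced at $t^\ast$ would be disjoint subdisks of $\Sigma_{s_0}$ lying on opposite sides of $S_{t^\ast}$, which would give $d(S)\leq 2$ outright, with no dependence on $\chi(\Sigma)$ --- a conclusion far stronger than the lemma and not one the splitting hypothesis can deliver.

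The unlabelled interval is precisely where the content of the lemma lives, and it is what the paper's (Johnson's) argument is built to handle. The paper takes the \emph{maximal} interval $[\alpha,\beta]$ of $t$-values on which no curve of $\Sigma_s\cap S_t$ is both essential in $S_t$ and bounds a c-disk for $\Sigma_s$, forms the subsurface $\Sigma'=\Sigma\cap g^{-1}([\alpha+\epsilon,\beta-\epsilon])$, and projects a pants decomposition of $\Sigma'$ into $\C(S)$: adjacent pairs of pants map to disjoint curves, so one gets an edge-path of length at most $-\chi(\Sigma')\leq-\chi(\Sigma)$, and maximality of $[\alpha,\beta]$ supplies one more edge at each end into the two disk sets, yielding $d(S)\leq 2-\chi(\Sigma)$. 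In other words, $-\chi(\Sigma)$ enters as the number of pairs of pants in the middle portion of $\Sigma$ between two levels of the sweep-out $g$, not through Euler-characteristic bookkeeping of c-compressions of $S$ at a single level. Your steps four and five would need to be replaced wholesale by this path-in-the-curve-complex mechanism; the disjointness issues you flag as the ``technical heart'' are not where the difficulty actually lies.
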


\begin{proof}[Sketch of Proof.]
Let $s$ be such that $\{s\}\times[-1,1]$ is disjoint from any vertices of $\Gamma$ and $S_t$ is neither mostly above nor below $\Sigma_s$ for any $t\in(-1,1)$. For every $t$, $\Sigma_s\cap S_t$ contains at least one curve essential in $S_t$. Let $g$ and $f$ be the sweep-outs given by $S$ and $\Sigma$ respectively. Let $[\alpha,\beta]$ be the largest interval such that for no $t\in(\alpha,\beta)$ is any curve of $\Sigma_s\cap S_t$ both essential in $S_t$ and bounds a disk or cut disk in either compression body to either side of $\Sigma_s$. (If no such essential interval exists, $d(S)\leq 1$ and we are done).

Consider $\Sigma'=\Sigma\cap g^{-1}([\alpha+\epsilon,\beta-\epsilon])$ for a small $\epsilon>0$. The projection $\pi:\Sigma'\rightarrow S$ which factors through inclusion into $S\times I$ will send isotopy classes of curves in $\Sigma'$ to isotopy classes of curves in $S$, and will send curves bounding pairs of pants in $\Sigma'$ to disjoint curves in $S$. Therefore the induced map $\pi^\ast:\Sigma'\rightarrow \C(S)\cup\{0\}$ projects a pants decomposition of $\Sigma'$ to a path in $\C(S)$, which has length at most $-\chi(\Sigma')$. Since $[\alpha,\beta]$ was chosen to be as large as possible, one more edge at each end of this path connects it to curves which bound disks in the handlebodies to either side of $\Sigma$, and $d(S)\leq 2-\chi(\Sigma')\leq 2-\chi(\Sigma)$.
\end{proof}

Further, we note that the stabilization of $\Sigma$ can be done in such a way as to preserve the spanning properties of the graphic.
\begin{lem}Let $\Sigma$, $S$ be splitting surfaces so that $\Sigma$ spans $S$ positively (resp. negatively), then a stabilization of $\Sigma$ also spans $S$ positively (resp. negatively).\label{lem-stab}\end{lem}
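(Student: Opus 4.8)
The plan is to perform the stabilization of $\Sigma$ locally, in a small ball disjoint from the spine structures and Morse-function levels that witness the spanning, so that the conditions "mostly above" and "mostly below" are preserved at the same parameter values $t_0,t_1,s$ that witnessed spanning for $\Sigma$. Recall that $\Sigma$ spans $S$ positively means there exist $t_0 < t_1$ and $s$ with $S_{t_0}$ mostly below $\Sigma_s$ and $S_{t_1}$ mostly above $\Sigma_s$; equivalently, each component of $S_{t_0}\cap\Theta_s^+$ lies in a disk or a $\partial X_K$-annulus of $S_{t_0}$, and symmetrically for $t_1$. I want to choose the new sweep-out $f'$ given by the stabilized surface $\Sigma'$ so that, for the same $s$, $\Theta_s'^{\pm}$ differs from $\Theta_s^{\pm}$ only inside a small ball $B$ that I am free to position.

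First I would fix $s$ and a point $p$ on the spine $f^{-1}(-1)$ — or anywhere in $\Theta_s^-$ — that lies in the interior of a ball $B \subset \Theta_s^-$ disjoint from all of $S_{t_0}, S_{t_1}$ (this is possible since these are compact surfaces and $B$ can be taken arbitrarily small). Stabilizing $\Sigma_s$ by tubing along an unknotted arc $\gamma$ inside $B$ that is parallel into $\Sigma_s$ produces $\Sigma_s'$; since all the modification happens inside $B$, we have $\Theta_s'^- = \overline{\Theta_s^- \setminus N(\gamma)}$ and $\Theta_s'^+ = \Theta_s^+ \cup N(\gamma)$, and both $N(\gamma)$ and $B$ are disjoint from $S_{t_0}$ and $S_{t_1}$. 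Then $S_{t_0}\cap\Theta_s'^+ = S_{t_0}\cap\Theta_s^+$ and $S_{t_1}\cap\Theta_s'^- = S_{t_1}\cap\Theta_s^-$ as subsets of the respective $S_t$'s, so the disk/annulus conditions are unchanged: $S_{t_0}$ is still mostly below $\Sigma_s'$ and $S_{t_1}$ is still mostly above it, with $t_0 < t_1$, so $\Sigma'$ spans $S$ positively. The negative case is identical, interchanging the roles of $t_0$ and $t_1$. The one technical point to check is that this local stabilization can be realized by a genuine sweep-out $f'$ given by $\Sigma'$ with $f'^{-1}(s) = \Sigma_s'$, which follows because a stabilization performed in a ball can be interpolated through splitting surfaces in the complement of that ball's exterior data; here Lemma \ref{lem-merid-stab}'s construction (a stabilization realized as attaching a handle along an arc parallel into the surface) shows the resulting triple is again a Heegaard splitting of the right type.

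The main obstacle is purely bookkeeping: making sure the chosen ball $B$ and arc $\gamma$ can simultaneously avoid $S_{t_0}$ and $S_{t_1}$ while still being positioned so that the modified sweep-out $f'$ has $\Sigma_s'$ as a level surface — i.e., that the stabilization is compatible with some parametrization $f'$ of $X_K$ having the stabilized surface as its level sets at $t \in (-1,1)$ and the two new spines at $t = \pm 1$. Since $\Sigma_s$ already appears as the level $f^{-1}(s)$ and the spine $f^{-1}(-1)$ has a neighborhood in which $B$ sits, one can modify $f$ only on $B$ and its collar, pushing the extra genus of $\Sigma'$ down toward $f^{-1}(-1)$; away from $B$ the sweep-out is unchanged, so all spanning data at levels $t_0, t_1$ survive verbatim. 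I would present this as a short argument once the local picture is set up, as the underlying topology is exactly the standard "stabilization is local" observation.
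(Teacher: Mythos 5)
Your proof is correct and is essentially the argument in the paper: both localize the stabilization in a small ball $B$ disjoint from the two level surfaces $S_{t_0}$, $S_{t_1}$ witnessing spanning (the paper takes $B\subset g^{-1}((t_0,t_1))$ meeting $\Sigma_s$ in a disk; you take $B$ near the spine $f^{-1}(-1)$), and then observe that the mostly-above/mostly-below conditions are unaffected since nothing changes outside $B$. The only nit is that $B$ must meet $\Sigma_s$ in a disk so that the stabilizing arc is properly embedded with endpoints on $\Sigma_s$ — your later formulas $\Theta_s'^{\pm}$ implicitly assume this, so just say it explicitly rather than $B\subset\Theta_s^-$.
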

\begin{proof}
Let $f$, $g$ be sweep-outs given by $\Sigma$ and $S$ respectively, so that $\Sigma$ spans $S$ positively. Let $t_1, t_2, s$ be the values so that $S_{t_1}$ is mostly below $\Sigma_s$ and $S_{t_2}$ is mostly above $\Sigma_s$. Let $\Sigma'$ be a stabilization of $\Sigma_s$, and let $B$ be a ball which contains the cancelling pair of handles added to $\Sigma_s$ and intersects $\Sigma_s$ in a disk, contained in $g^{-1}((t_1,t_2))$. Let $f'$ be a sweep-out of $\Sigma'$ so that $f'^{-1}(s)\setminus B$ is setwise equal to $f^{-1}(s)\setminus B$. Since $S_{t_1}$ is mostly below $\Sigma_s$, and disjoint from $B$, it is also mostly below $\Sigma'_s$, and likewise $S_{t_2}$ is mostly above $\Sigma'_s$. Therefore $\Sigma'$ spans $S$ with the same sign that $\Sigma$ does.
\end{proof}

%\subsection{Isotopies of the graphic}  
Finally, the following lemma is proved by Johnson in \cite[Lemma 9]{joh11} for graphics of Morse functions. We reproduce the proof for sweep-outs with some minor modifications and clarifications.
\begin{lem}
Let $f_0$, $f_1$ be two sweep-outs given by equivalent Heegaard surfaces $\Sigma$, $\Sigma'$, and $g$ be a sweep-out given by the splitting surface $S$, so that $f_0\times g$ and $f_1\times g$ are both generic. Then there is a family of sweep-outs $f_r$, $r\in[0,1]$, so that for all but finitely many $r$, $f_r\times g$ is generic. At the non-generic values, the graphic has at most two valence-two or -four vertices at the same level, or one valence-six vertex.\label{lem-isotopy}
\end{lem}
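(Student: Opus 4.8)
The plan is to adapt Johnson's argument from \cite{joh11} about moving between two Morse functions to the present setting of sweep-outs for $X_K$. The key structural fact is that the space of (not necessarily generic) sweep-outs given by a fixed Heegaard surface is connected, so there is \emph{some} path $f_r$, $r\in[0,1]$, of sweep-outs from $f_0$ to $f_1$; the content of the lemma is that this path can be chosen so that the associated one-parameter family of graphics $f_r\times g$ degenerates only in the mildest possible ways. First I would recall the stratification of the space of smooth maps $X_K\times[0,1]\to[-1,1]^2$ near a generic one: the codimension-one strata, which a generic path $f_r$ must cross transversally, correspond exactly to the local models that produce the listed degeneracies — two $2$- or $4$-valent vertices of the graphic passing through the same horizontal level $\{t=\mathrm{const}\}$, a $4$-valent vertex becoming momentarily tangent to a horizontal line, or three tangency/crossing events colliding into a single valence-six vertex. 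By general position, at each such $r$ only one codimension-one phenomenon occurs, and we never hit a codimension-two stratum, giving the ``at most two $\dots$ or one valence-six vertex'' conclusion.

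The steps, in order, would be: (1) set up the ambient space of sweep-outs given by $\Sigma$ up to the obvious reparametrizations of the target interval, and observe it is connected — this follows because any two spines of a handlebody (or punctured handlebody, using the spine conventions of Section 5) are ambient isotopic, and the space of such isotopies is connected; (2) pick an initial smooth path $f_r$ realizing this connectivity and check that $f_r\times g$ is a generic two-parameter family except along a $1$-complex of parameter values; (3) perturb the path rel endpoints, using that $f_0\times g$ and $f_1\times g$ are already generic, so that it meets only the open codimension-one strata of non-generic maps and misses all codimension-$\ge 2$ strata; (4) enumerate those codimension-one strata and read off that each gives precisely one of the stated local pictures. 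Throughout, I would lean on \cite{joh08} for the precise genericity conditions defining $\Gamma$ (the $2$- and $4$-valence, the finiteness along horizontal/vertical lines, and the distinctness of $s$- and $t$-coordinates of vertices) and note that the corner phenomena on $\partial X_K$ discussed after the definition of the graphic do not interfere, since their closures stay off the interior of the square.

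The main obstacle, and the place requiring the most care, is step (3) together with the bookkeeping in step (4): one must verify that the relevant space of non-generic sweep-out graphics is genuinely stratified with the expected codimensions, so that a generic path avoids everything of codimension two and hits each codimension-one piece transversally and one at a time. In Johnson's Morse-function version this is handled via Cerf theory / the classification of generic one-parameter families of functions; here the analogous input is the analysis of one-parameter families of the pair (sweep-out, fixed sweep-out), and the modification is essentially to track that the extra structure coming from the punctures — the meridian edges and vertical edges in the spine of $H^\pm\setminus\eta(\tau^\pm)$, and the annular (rather than merely disk) components of $S_t\cap\Theta_s^\pm$ — does not create new codimension-one degeneracies beyond those already present in the closed case. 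I expect this to go through with only ``minor modifications and clarifications,'' exactly as the lemma statement advertises, because the punctures contribute only boundary-parallel annuli whose behavior under the sweep-out is controlled and does not add vertices to $\Gamma$ in the interior of the square; so the final write-up would consist mostly of quoting \cite[Lemma 9]{joh11} and indicating these three checks.
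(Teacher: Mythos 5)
Your plan correctly identifies the shape of the statement --- produce a path of sweep-outs from $f_0$ to $f_1$ and argue that a generic such path only meets ``mild'' degeneracies --- but the step you yourself flag as the main obstacle (your step (3)/(4)) is exactly the content of the lemma, and your proposal leaves it as an expectation rather than an argument. You assert that the non-generic maps form a stratified set whose codimension-one strata are precisely the listed local models, and that a generic path crosses them transversally one at a time. In an infinite-dimensional space of pairs of sweep-outs this is not something one can get from ``general position'' alone; it is precisely what has to be constructed. The paper supplies a concrete mechanism: after replacing the $f_r$ and $g$ by Morse approximations, it covers the compact path by convex neighborhoods of stable functions and replaces it by a piecewise-linear path with vertices $\varphi_0,\dots,\varphi_n$; on each segment the interpolation $\left((1-\alpha)\varphi_i+\alpha\varphi_{i+1}\right)\times g'$ has graphic equal to the graphic of $\pi_\beta\circ(\varphi_i\times\varphi_{i+1}\times g')$ for a rotating projection $\pi_\beta$. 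Mather's density theorem for stable maps between 3-manifolds makes $\varphi_i\times\varphi_{i+1}\times g'$ stable, so its discriminant is a compact surface with cusped image, and the enumeration of degeneracies (dovetail, eye, exchange; two same-level vertices or one valence-six vertex) falls out of a Morse-theoretic analysis of a slope function $\sigma$ on that discriminant surface. None of this apparatus --- the linear interpolation trick, the three-dimensional graphic, Mather's theorem, the function $\sigma$ --- appears in your proposal, and without it (or an equivalent Cerf-theoretic input for one-parameter families of maps to $\R^2$) the claimed stratification is unsubstantiated.

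Two smaller points. First, the path itself does not come from an abstract connectivity claim about the space of sweep-outs of a single surface: the hypothesis is that $\Sigma$ and $\Sigma'$ are \emph{equivalent}, so there is an ambient isotopy $\phi_r$ carrying one to the other together with its spines, and the paper simply sets $f_r=f\circ\phi_r$; your connectivity-of-spines argument is aimed at a slightly different statement. Second, your instinct about the punctures is right and matches the paper --- the extra degeneracies coming from the spines meeting $\partial X_K$ are pushed to an $\epsilon$-neighborhood of the boundary of the square and do not affect the interior --- but that observation handles only the boundary issue, not the core genericity argument.
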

The idea of the proof is as follows -- we approximate $\{f_r\}$ with a piecewise linear path in $C^\infty(X_K,\R)$, and consider consecutive vertices of this path, $\varphi_0$ and $\varphi_1$. Then we consider the image in $[-1,1]^3=\{(s_1,s_2,t)\}$ of points in $M$ where the surfaces given by $\varphi_0^{-1}(s_1)$, $\varphi_1^{-1}(s_2)$ and $g^{-1}(t)$ are not transverse, a ``3D graphic'' of sorts. Then we examine the projection of this image onto a plane in the cube which rotates about the $t$-axis from the $\varphi_0\times g$ plane to the $\varphi_1\times g$ plane, and classify the types of degeneracies which can occur. It can probably be proved for splitting surfaces in general with some attention given to singularites on the boundary, but we only need it for $\Sigma$, $\Sigma'$ as Heegaard surfaces. 
\begin{proof}
If $\Sigma$ and $\Sigma'$ are equivalent, there is an ambient isotopy of $M$, say $\phi_r$, which takes $\Sigma$ to $\Sigma'$ and takes the spines of $\Sigma$ to the spines of $\Sigma'$. Let $f_r=f\circ\phi_r$ be the sweep-outs given by the surfaces of this one-parameter family. Because Morse functions are dense in $C^\infty(M,\R)$, each $f_r$ can be approximated arbitrarily closely by a Morse function $f'_r$ in the $C^\infty$ topology, meaning that for any $\epsilon>0$, there exists a 1-parameter family of Morse functions $f'_r$ so that for $t\in[-1+\epsilon, 1-\epsilon]$, $f'^{-1}_r(t)=f_r^{-1}(t)$, so the graphic for $f'_r\times g$ is identical to that of $f_r\times g$ except on an arbitrarily small neighborhood of $\{-1,1\}\times I$. In addition, we approximate $g$ with a Morse function $g'$ so that the graphic of $f'_r\times g'$ is different from that of $f_r\times g$ only in a small neighborhood $\partial\left([-1,1]^2\right)$. 

Since the spines of the two splitting surfaces are disjoint except where they lie on $\partial X_K$, the difference between the graphics of $f'_r\times g'$ and $f_r\times g$ is the sequence of births and deaths that occur within $\epsilon$ of the boundary of the square, none of which share either $s$ or $t$ coordinates. By pulling these cusps back to the boundary of the square, we recover the graphic of $f_r\times g$. 

Now, every $f'_r$ is Morse, so there is an open, convex neighborhood of $f'_r\in N_r\subset C^\infty(X_K,\R)$ so that every $\varphi\in N_r$ is isotopic to $f'_r$. We can cover $\{f'_r\}$ by such neighborhoods, and since $\{f'_r\}\cong[0,1]$ is compact, there is a finite cover by such neighborhoods, so the path $\{f'_r\}$ can be replaced by a piecewise-linear path with each line segment contained in one convex neighborhood, and consecutive vertices $\varphi_0,...,\varphi_n\in\{f'_r\}$. For $\alpha\in[0,1]$, let $\beta=\frac{\alpha}{1-\alpha}$ and $\pi_\beta:\R^2\times\R^1$ be the projection of the $\R^2$ factor onto a line through the origin which has slope $\beta$. The graphic of the function $\left((1-\alpha)\varphi_0+\alpha\varphi_1\right)\times g'$ is the graphic of $\pi_\beta\circ\left(\varphi_0\times\varphi_1\times g'\right):X_K\rightarrow\R^1\times\R^1$.

The maps can all be chosen so that $\varphi_i$, $\varphi_{i+1}$, $g'$, $\varphi_i\times g'$, $\varphi_{i+1}\times g'$ and $\varphi_i\times\varphi_{i+1}$ are all stable, meaning each has an open neighborhood of isotopic Morse functions in the appropriate vector space. Since a projection is a continuous map, the preimage of each of these open neighborhoods in $C^\infty(X_K, \R^3)$ under projection is an open neighborhood of $\varphi_i\times\varphi_{i+1}\times g'$, and their intersection is also open. Since stable functions between 3-manifolds are dense in the $C^\infty$ topology, as shown by Mather \cite{mather}, we can ensure that $\varphi_i\times\varphi_{i+1}\times g'$ will be stable, as will be the maps in the projections listed above. Then, by Mather's classification of singularities of stable maps between 3-manifolds \cite{mather}, the discriminant set of $\varphi_i\times\varphi_{i+1}\times g'$ is a compact 2-submanifold $S\subset X_K$, and its image is an immersed 2-manifold with cusps.

Let $F=\varphi_{i}\times\varphi_{i+1}\times g'$. For $p\in S$, if $F(p)$ is not a cusp, there is a map $T_pS\rightarrow T_{F(p)}F(S)$. If this plane is parallel to the $s_1-s_2$ plane, this means that the tangency between $\varphi_i^{-1}(F(p))$ and $\varphi^{-1}_{i+1}(F(p))$ is preserved under two degrees of freedom independent of the parameter of $g'$, which means $p$ is at a vertex of the graphic of $\varphi_i\times\varphi_{i+1}$. As this graphic is generic, there are only finitely many such points. When $T_{F(p)}F(S)$ is not parallel to the $s_1-s_2$ plane, the intersection of $T_{F(p)}F(S)$ with the plane $t=g'(p)$ determines a slope in $\R^2$. For each $p\in S$ where $F(p)$ is not a cusp and $T_{F(p)}F(S)$ is not parallel to $t= g'(p)$, let $\sigma(p)$ be this slope. Observe that, though the number of non-cusp points where the slope is not well-defined is finite, the Intermediate Value Theorm implies that any connected level curve of $\sigma^{-1}(\beta)$ must contain a nonzero, even number of points $p$ at which $T_{F(p)}F(S)$ is parallel to $t=g'(p)$. 

We may perturb $F$ slightly to ensure that $\sigma$ is a Morse function on $S$ away from the finite number of points and curves (corresponding to the vertices of the graphic $\varphi_i\times\varphi_{i+1}$, and cusps of $F(S)$). This allows us to locally identify a patch of $F(S)$ with the graph of some function $\gamma:\R^2\rightarrow\R$. If $x=\gamma(y,z)$, and for some fixed $z=g'(p)$, $\frac{dx}{dy}=\sigma(F^{-1}(x,y,z))$, we have that there is some smooth $\gamma':\R\rightarrow\R$ so that ${\displaystyle \gamma(y,z)=\gamma'(z)+\int_{(x,0,z)}^{(x,y,z)}\sigma\circ F^{-1}dy}$. 

The discriminant set of $\pi_\beta\circ F$ is the image under $F$ of the closure of the set of points $\sigma^{-1}(\beta)$ in $S$. 

Since $\sigma$ is Morse, all but finitely many $\beta$ are regular values. A preimage of a regular $\beta$ value is a 1-dimensional submanifold of $S$. When $\beta$ is passing through a regular value, points in the interior of $\sigma^{-1}(\beta)$ map to the interior of an edge of the graphic of $pi_\beta\circ F$. If $p\in\overline{\sigma^{-1}(p)}\setminus\sigma^{-1}(p)$, 
%restricting $\gamma$ to points in the interior of $\sigma^{-1}(\beta)$ gives that such a point gets mapped to the interior of an edge of the graphic of $\pi_\beta\circ F$. If the point $p$ is not in the interior of $\sigma^{-1}(\beta)$, then it is in the closure of this set, so 
then it is either a point at which $T_{F(p)}F(S)$ is parallel to $t=g(p)$, or $F(p)$ belongs to a cusp of $F(S)$. In the first case, the Intermediate Value Theorem implies that such points must come in pairs. If no such points are introduced or removed at this level set $\beta$, then such points get mapped to the interior of an edge, as well. If a pair of such points is introduced or removed, the graphic of $\pi_\beta\circ F$ gains or loses a pair of cusps connected by an edge in a ``dovetail'' type move. In the case that $F(p)$ is a cusp, $p$ gets sent to a cusp of the graphic of $\pi_\beta\circ F$. Generically, the preimage of cusp points of $F(S)$ is transverse to level sets of $\sigma$. When $\beta$ is regular and $\sigma^{-1}(\beta)$ is tangent to an arc of cusp, a pair of cusps is introduced in the graphic of $\pi_\beta\circ F$ is either another dovetail move or an eye. 

\begin{figure}[h!]
\def\svgwidth{0.2\linewidth}
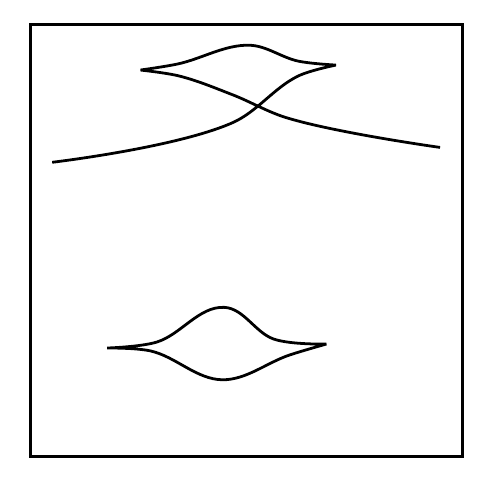
\end{figure}

When $\beta$ passes through a critical level of $\sigma$, the index of the critical level is either 0, 1 or 2. If it is 0 or 2, then there must be a pair of tangencies to $\R^2\times\{g'(p)\}$ immediately after or before, respectively, which do not exist before and after, respectively, this value of $\beta$, and in the graphic of $\pi_\beta\circ F$ these two corresponding cusps cancel each other out in an ``eye,'' during which process two vertices occupy the same level. If the critical point has index 1, a pair of plane-parallel points cancel and another pair is born, leading to an ``exchange'' type move at which there are two valence-two vertices at the same level. 

Thus there are only finitely many values of $\beta$ (and so, of $\alpha$ as well) at which $\left((1-\alpha)\varphi_0+\alpha\varphi_1\right)\times g'$ has a critical point which fails to be stable. In between these critical points, the graphic changes by some homotopy of the image of the discriminant set, which can be done generically except at a finite number of points introducing a triple point, self-tangency or double cusp points. 
\end{proof}

\section{Common stabilizations of opposite-sided tubed Heegaard surfaces}

To use the above results, we must understand how tubed Heegaard surfaces span a bridge sphere $S$. 

\begin{lem}If $K$ lies below (resp. above) $\Sigma$, then there are sweep-outs $f$ and $g$ for $\Sigma$ and $S$ respectively such that $\Sigma$ spans $S$ positively (resp. negatively).\label{lem-above-span-pos}
\end{lem}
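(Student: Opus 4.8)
The plan is to write down $f$ and $g$ explicitly from a single reference Morse function on $S^3$ and then read off the spanning condition directly from the geometry of a tubed surface. First I would fix a Morse function $h\colon S^3\to[-1,1]$ as in the discussion following Definition~\ref{dfn-above-below}: $h^{-1}(t)$ a $2$-sphere for $t\in(-1,1)$, $h^{-1}(0)=S$, $h|_K$ Morse with image $[-\tfrac12,\tfrac12]$, and $\Sigma$ realised as a tubed surface $P\cup\bigl(\bigcup_iA_i\bigr)$, where $P=\overline{S\setminus\bigcup_iN(x_i)}$, each $A_i$ runs along $K$, and $h|_\Sigma$ has image inside $[-\tfrac34,\tfrac34]$. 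Let $H_K$ be the complementary handlebody of $\Sigma$ in $S^3$ with $K$ in its spine, and $H'$ the other. The connected planar subsurface $P\subset\Sigma$ has a $\{h<0\}$-side collar and a $\{h>0\}$-side collar, and these lie in the two distinct components $H_K$, $H'$ of $S^3\setminus\Sigma$; choosing the sign labels of Definition~\ref{dfn-above-below} so that ``$-$'' means the $\{h<0\}$-side, the hypothesis that $K$ is below $\Sigma$ says exactly that the $\{h<0\}$-side collar of $P$ is the one lying in $H_K$ (and that $K$ is above $\Sigma$ says the $\{h>0\}$-side collar is).

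Next I would choose $g$ to be the sweep-out given by $S$ with $g^{-1}(0)=S\cap X_K$ and, for small $\tau>0$, $g^{-1}(-\tau)$ (resp.\ $g^{-1}(\tau)$) equal to the $2n$-punctured sphere $h^{-1}(-\tau)\cap X_K$ (resp.\ $h^{-1}(\tau)\cap X_K$), with $g^{-1}(\pm1)$ the spines of the two sides of $S$ in $X_K$; and $f$ the sweep-out given by $\Sigma$ with $f^{-1}(0)=\Sigma$ and $f^{-1}(-1)$ the spine of the compression body $H_K\cap X_K$, so that $f^{-1}(1)$ is the spine of $H'$. Then, with $s=0$, the region $\Theta_0^-=f^{-1}([-1,0])$ is (a collar neighbourhood of) $H_K\cap X_K$ and $\Theta_0^+=f^{-1}([0,1])$ is (a collar neighbourhood of) $H'$. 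Since the spanning condition only asks for the existence of $s,t_0,t_1$, there is no genericity of $f\times g$ to worry about here.

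For a small $\epsilon>0$ I would then check that $S_{-\epsilon}=g^{-1}(-\epsilon)$ is mostly below $\Sigma_0$ and that $S_{\epsilon}=g^{-1}(\epsilon)$ is mostly above $\Sigma_0$. Since $P$ lies at level $0$ we have $S_{-\epsilon}\cap P=\emptyset$, so $S_{-\epsilon}\cap\Sigma_0\subseteq S_{-\epsilon}\cap\bigcup_iA_i$; and since each $A_i$ runs along $K$ while $K$ meets $\{h=-\epsilon\}$ in exactly $2n$ points, one near each $x_j$, every component of $S_{-\epsilon}\cap\Sigma_0$ is a small loop encircling the single puncture of $S_{-\epsilon}$ near some $x_j$, hence bounds a once-punctured disk in $S_{-\epsilon}$ --- that is, an annulus of $S_{-\epsilon}$ with one boundary on $\partial X_K$. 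Cutting $S_{-\epsilon}$ along these loops leaves a ``main'' planar piece, which is disjoint from $\Sigma_0$ and meets the $\{h<0\}$-side collar of $P$, together with nested once-punctured disks and annuli near the $x_j$'s, each contained in the outermost once-punctured disk cut off near its $x_j$. When $K$ is below $\Sigma$ the $\{h<0\}$-side collar of $P$ lies in $H_K=\Theta_0^-$, so the main piece lies in $\Theta_0^-$ and every component of $S_{-\epsilon}\cap\Theta_0^+$ is contained in one of those once-punctured disks; thus $S_{-\epsilon}$ is mostly below $\Sigma_0$. The identical analysis of $S_{\epsilon}$ --- whose main piece meets the $\{h>0\}$-side collar of $P$, which lies in $H'=\Theta_0^+$ --- shows $S_{\epsilon}$ is mostly above $\Sigma_0$. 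Since $-\epsilon<\epsilon$, $\Sigma$ spans $S$ positively. If instead $K$ is above $\Sigma$, the two collars of $P$ trade their roles relative to $H_K$ and $H'$; the same two computations then give that $S_{\epsilon}$ is mostly below and $S_{-\epsilon}$ is mostly above $\Sigma_0$, and since $\epsilon>-\epsilon$, $\Sigma$ spans $S$ negatively.

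The main obstacle I anticipate is the last analysis of $S_{\mp\epsilon}\cap\Sigma_0$ for a \emph{general} tubed surface: when the annuli $A_i$ are long, wind repeatedly up and down along $K$, and nest, the pushed-off sphere $S_{\mp\epsilon}$ can meet $\Sigma_0$ in many loops clustered near a single $x_j$, cutting off nested annuli as well as once-punctured disks, and it is not immediate which side of $\Sigma_0$ each one lands on. What makes it work is that (i) every such loop encircles a puncture of $S_{\mp\epsilon}$ --- this is precisely where ``$A_i$ runs along $K$'' is used --- so that each cut-off piece is inessential in the exact sense demanded by the definitions of mostly above and mostly below, and (ii) the definition of $K$ lying above or below $\Sigma$ determines which collar of $P$ contains the main piece of $S_{\mp\epsilon}$. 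The rest is bookkeeping with the level sets of $h$.
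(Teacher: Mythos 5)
Your proof is correct, but it is organized quite differently from the paper's. The paper's primary argument is indirect: it first invokes Lemmas \ref{lem-span-pos-neg} and \ref{lem-split} together with the hypothesis $d(S)>2n$ to conclude that $\Sigma$ must span $S$ with exactly one sign, then exhibits a single level $S_{t_-}$ lying entirely on the spine side of $\Sigma_s$ (when $K$ is below) to pin down which sign it is; the explicit construction is relegated to a one-sentence remark at the end, and is carried out with $s$ near $-1$ rather than at $s=0$. You instead give a fully worked-out direct construction: you exhibit both the mostly-below level $S_{-\epsilon}$ and the mostly-above level $S_{+\epsilon}$ for the single slice $\Sigma_0=\Sigma$, using the explicit structure $P\cup\bigl(\bigcup_i A_i\bigr)$ of a tubed surface. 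The two key observations you isolate are exactly the right ones: every circle of $S_{\mp\epsilon}\cap\Sigma$ lies near a puncture and encircles it (because each $A_i$ runs along $K$, whose crossings of the level $\mp\epsilon$ for small $\epsilon$ all occur near the $x_j$), so every component of $S_{\mp\epsilon}$ on the ``wrong'' side is trapped in a once-punctured disk, which is an annulus of $S_{\mp\epsilon}$ with one boundary on $\partial X_K$ as the definition of mostly above/below requires; and the definition of $K$ lying below $\Sigma$ says precisely that the main planar piece of $S_{-\epsilon}$ sits in $\Theta_0^-$. What your route buys is self-containedness: it needs neither the distance hypothesis nor the span-or-split dichotomy, and it produces both witnesses explicitly. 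What it costs is the careful bookkeeping of the intersection pattern near the punctures, which you handle adequately. Either version is acceptable.
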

\begin{proof}
First, note that if $d(S)>2n$ and $\Sigma$ is a genus-$n$ surface, Lemmas \ref{lem-span-pos-neg} and \ref{lem-split} guarantee that $\Sigma$ spans $S$ either positively or negatively, but not with both signs. We take the convention that the sweep-outs are oriented so that $\partial M\subset f^{-1}(-1)$, and $g^{-1}(-1)$ is the spine on the minus side of $S$. 

If $K$ lies below $\Sigma$, then for any value of $s\in(-1,1)$, there is a spine of $S$ on the minus side of $\Sigma_s$ which is entirely disjoint from $\Sigma_s$, and therefore there is a value $t^-$ close to $-1$ and a sweep-out $g$ so that $S_{t_-}\subset f^{-1}([-1,s])$, so $S_{t_-}$ is mostly below (in fact, entirely below) $\Sigma_s$. Fix one such $s$. Since the graphic must span either positively or negatively, there must be another $t_+>t_-$ such that $S_{t_+}$ is mostly above $\Sigma_s$, and so $\Sigma$ spans $S$ positively. One way to construct this pair $(s,t_+)$ explicitly is to take $s$ very close to $-1$, so that $\Sigma_s$ is very close to $\partial M$, and then for some $t_+$ it is clear that $S_{t_+}\cap f^{-1}([-1,s])$ is a collection of once-punctured disks in $S_{t_+}$. 

A symmetric argument shows that if $K$ lies above $\Sigma$, then there are sweep-outs so that $\Sigma$ spans $S$ negatively.
\end{proof}

Furthermore, every pair of sweep-outs $f$ and $g$ for $\Sigma$ and $S$ define a graphic in which $\Sigma$ spans $S$ with the same sign, independent of the choice of sweep-outs.

\begin{lem}
Let $K$ be an $n$-bridge knot with $n\geq 3$, $d(K)>2n$, and $\Sigma$ a Heegaard surface for $X_K$. If there are sweep-outs $f, f'$ corresponding to $\Sigma$ such that $f$ spans $g$ positively and $f'$ spans $g$ negatively, then during any isotopy $f_r$ with $f_0=f$ and $f_1=f'$, there is some $r$ such that $f_r$ must either split $g$ or span $g$ with both signs.
\label{lem-isotopy-span}
\end{lem}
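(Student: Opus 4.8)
The plan is to run a continuity/connectedness argument on the one-parameter family $f_r$, using Lemma \ref{lem-isotopy} to control how the graphic $f_r\times g$ degenerates. Define, for each $r$ such that $f_r\times g$ is generic and $f_r$ neither splits $g$ nor spans $g$ with both signs, a sign $\varepsilon(r)\in\{+,-\}$ recording which way $f_r$ spans $g$ (this is well-defined by the remark following Lemma \ref{lem-above-span-pos}, since a genus-$n$ surface with $d(K)>2n$ cannot span $g$ with both signs by Corollary \ref{cor-2n-1} and Lemma \ref{lem-split}, nor split it by Lemma \ref{lem-split} again). We have $\varepsilon(0)=+$ and $\varepsilon(1)=-$. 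First I would argue that $\varepsilon$ is locally constant on the open dense set of $r$ for which $f_r\times g$ is generic: moving $r$ a small amount perturbs the graphic only slightly, and the conditions ``$S_{t_0}$ mostly below $\Sigma_s$'' and ``$S_{t_1}$ mostly above $\Sigma_s$'' are open conditions on the pair $(s,t_i)$, so a witnessing pair for spanning with a given sign persists. Hence the only way $\varepsilon$ can change from $+$ to $-$ is across one of the finitely many non-generic parameter values supplied by Lemma \ref{lem-isotopy}.

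Next I would examine what happens at such a non-generic value $r_0$. Suppose, for contradiction, that at \emph{no} parameter $r$ does $f_r$ split $g$ or span $g$ with both signs. Then $\varepsilon$ is defined on all of $[0,1]$ minus the finitely many non-generic values, and to get from $+$ to $-$ it must jump across some $r_0$. The key point is that Lemma \ref{lem-isotopy} tells us the graphic at $r_0$ differs from the nearby generic graphics only by one of a short list of elementary moves: an exchange of two valence-two vertices at the same level, an eye (birth/death of a pair of cusps), a dovetail, or a single valence-six vertex — all local modifications. I claim none of these can change the spanning sign without, at some nearby parameter, producing a surface that splits $g$ or spans it both ways. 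The reasoning: pick a spanning witness $(s,t_0,t_1)$ for $\varepsilon(r_0-\delta)=+$, i.e. $t_0<t_1$ with $S_{t_0}$ mostly below and $S_{t_1}$ mostly above $\Sigma_s$. As $r$ increases through $r_0$, track what must fail for this witness to be destroyed. Either (i) the vertical line $\{s\}\times[-1,1]$ picks up a new vertex at $r_0$ — but then choose $s'$ slightly to one side; the ``mostly above/below'' regions vary only near the affected levels, and for a generic choice of $s'$ on the far side the witness $(s',t_0,t_1)$ still works with the \emph{same} ordering $t_0<t_1$, contradicting that $\varepsilon$ jumped; or (ii) the region of $t$-values for which $S_t$ is mostly below $\Sigma_s$ (resp. mostly above) changes its topology — it can gain or lose an interval, or two such intervals can merge. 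If at some $r$ near $r_0$ there is an $s$ and values $t_0<t_1$ with $S_{t_0}$ mostly below and $S_{t_1}$ mostly above, \emph{and also} values $t_0'>t_1'$ with the reverse, then $f_r$ spans $g$ with both signs and we are done; if instead for some $s$ along $\{s\}\times[-1,1]$ there is \emph{no} $t$ with $S_t$ mostly above and \emph{no} $t$ with $S_t$ mostly below, then $f_r$ splits $g$ and we are again done. The content of the argument is that a sign change of $\varepsilon$ forces one of these two configurations to occur at a nearby parameter — essentially because the "mostly above" and "mostly below" regions are each unions of intervals in $t$ varying continuously, and passing from "below-region precedes above-region" to "above-region precedes below-region" cannot happen without either the two regions coexisting in both orders (both-sign spanning) or one of them momentarily vanishing (splitting).

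I expect the main obstacle to be making precise this last dichotomy at the non-generic levels, i.e. verifying that each of the finitely many moves from Lemma \ref{lem-isotopy} (exchange, eye, dovetail, valence-six) either preserves $\varepsilon$ or introduces a both-sign or splitting configuration at an adjacent parameter. This is the analogue of the case analysis Johnson carries out in \cite{joh08,joh10}; I would handle it by a local model near each affected vertex, noting that ``mostly above/below'' is governed by whether components of $S_t\cap\Theta_s^\pm$ are disk- or $\partial$-parallel annulus-subsets of $S_t$, a condition that changes only when a curve of $S_t\cap\Sigma_s$ becomes essential or inessential in $S_t$, which in the graphic corresponds to crossing an edge — and edges move continuously except at the listed vertices. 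So the finitely many bad levels are exactly where the combinatorics of the regions can change, and at each we check the dichotomy directly. Once that local analysis is in hand, the global statement follows from the intermediate-value-style argument above: $\varepsilon(0)=+\neq -=\varepsilon(1)$ forces a bad level, and a bad level forces splitting or both-sign spanning at some $r$.
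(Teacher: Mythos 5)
Your global skeleton agrees with the paper's: define the spanning sign at generic parameters, observe it is locally constant there (spanning with both signs is excluded by Corollary \ref{cor-2n-1} since $g(\Sigma)=n<2n-1$, and splitting by Lemma \ref{lem-split} since $d(S)>2n\geq 2-\chi(\Sigma)$ would be violated), and conclude that the sign can only flip at one of the finitely many non-generic parameters supplied by Lemma \ref{lem-isotopy}. Up to that point you are on the same track as the paper.

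The gap is in the case analysis at the non-generic value $r_0$. Your dichotomy --- either the mostly-above and mostly-below regions coexist in both orders (both-sign spanning) or one of them momentarily vanishes (splitting) --- omits the third possibility, which is exactly the hard case: the two regions can \emph{swap across a single vertex} of the graphic, i.e.\ there is a vertex $(s,t)$ of $f_{r_0}\times g$ such that $S_t$ is mostly above $\Sigma_{s-\epsilon}$ and mostly below $\Sigma_{s+\epsilon}$, with neither both-sign spanning nor splitting appearing at any nearby parameter. You flag this as ``the main obstacle'' to be handled by ``a local model near each affected vertex,'' but you never supply the argument, and the resolution is not a combinatorial fact about the graphic moves: it is a Morse-theoretic Euler characteristic count on the level surface $S_t$ itself. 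Near such a vertex, $f_{r_0}|_{S_t}$ has exactly two index-one critical points if the vertex has valence four (three if valence six) separating the mostly-above configuration from the mostly-below one, so $S_t$ decomposes into disks and once-punctured disks joined by at most two (resp.\ three) bands, forcing $\chi(S)\geq -2$ (resp.\ $\geq -3$); this contradicts $\chi(S\setminus K)=2-2n\leq -4$ for $n\geq 3$. Note that your proposal never invokes the hypothesis $n\geq 3$, which is used precisely and only at this step --- for $n=2$ the exchange across a valence-four vertex is not obstructed --- so the missing step is not a routine verification but the place where the numerical hypotheses of the lemma actually enter.
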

\begin{proof}
Since $f_0$ and $f_1$ are given by equivalent surfaces, there is a one-parameter family of sweep-outs $\{f_r\}$, and by Lemma \ref{lem-isotopy} we know that the graphic $f_r\times g$ is generic except at finitely many values of $r\in[0,1]$, where it has two valence-two or valence-four vertices occupying the same level, or a single valence-6 vertex. 

Assume that away from the non-generic $r$, $f_r$ neither spans nor splits $g$ with both signs. Therefore $f_r$ splits $g$ either positively or negatively, but not both. Then there is some non-generic $r_0$ such that $f_{r_0-\epsilon}$ splits $g$ positively and $f_{r_0+\epsilon}$ spans $g$ negatively. At this $r_0$, the regions where $S$ is mostly above and mostly below $\Sigma$ must exchange, so there is a vertex $(s,t)$ in the graphic of $f_{r_0}\times g$ so that $S_t$ is, say, mostly above $\Sigma_{s-\epsilon}$ and mostly below $\Sigma_{s+\epsilon}$. Then $f_{r_0}|_{S_{t+\epsilon}}$ is a Morse function on $S_{t+\epsilon}$. If $(s,t)$ is a valence-4 vertex, this Morse function has exactly two index-1 critical points, and the rest are index 0 or 2. Then $S$ admits a decomposition into a collection of disks and once-punctured disks, which have Euler characteristic at least 0, connected by two bands of Euler characteristic $-1$, so $\chi(S)\geq -2$. If $(s,t)$ is a valence-6 vertex, then a similar argument shows that $\chi(S)\geq-3$.  But if $n\geq 3$, we have that $\chi(S)\leq-4$, a contradiction.

Therefore for some generic $r_0$, $f_{r_0}$ either spans $g$ both positively and negatively, or splits $g$.  
\end{proof}

\begin{cor}If $\Sigma$ is a tubed Heegaard surface with $K$ below (resp. above) $\Sigma$, $b(K)\geq 3$ and $d(K)>2b(K)$, then for any sweep-outs $f$ of $\Sigma$ and $g$ of $S$ with $\partial M\subset f^{-1}(-1)$ and $g^{-1}(-1)$ on the minus side of $S$, $f$ spans $g$ positively (resp. negatively).\label{cor-isotopy-span}\end{cor}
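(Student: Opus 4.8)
The plan is to show that a genus-$n$ tubed Heegaard surface is far too small either to split the bridge sphere or to span it with both signs, so that the sign of the spanning is forced, and then to argue that this sign is independent of the chosen sweep-outs. Write $\Sigma$ for the tubed Heegaard surface and $S$ for the bridge sphere; since $\Sigma$ is closed of genus $n$ we have $\chi(\Sigma)=2-2n$, hence $2-\chi(\Sigma)=2n$. As $n\geq 3$ gives $\chi(\Sigma)\leq -4<0$, Lemma \ref{lem-split} applies, and because $d(S)=d(K)>2n=2-\chi(\Sigma)$ it shows that $\Sigma$ does not split $S$ for any choice of sweep-outs. Likewise $n\geq 3$ forces $n<2n-1$, so Corollary \ref{cor-2n-1} rules out $\Sigma$ spanning $S$ both positively and negatively for any sweep-outs. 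Since for generic sweep-outs $\Sigma$ must either split $S$ or span it (the dichotomy used at the start of the proof of Lemma \ref{lem-above-span-pos}), combining the two exclusions gives: for every generic pair $f,g$ respecting the conventions $\partial M\subset f^{-1}(-1)$ and $g^{-1}(-1)$ on the minus side of $S$, the surface $\Sigma$ spans $S$ with exactly one sign.

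Next I would pin that sign down and show it does not depend on $f$. Suppose $K$ lies below $\Sigma$ (the case $K$ above is the mirror image, with ``positive'' replaced by ``negative'' throughout). By Lemma \ref{lem-above-span-pos} there is at least one admissible pair $(f_0,g_0)$ for which $\Sigma$ spans $S$ positively. Now fix $g_0$ and let $f$ be an arbitrary sweep-out of $\Sigma$. If $\Sigma$ were to span $S$ negatively with respect to $(f,g_0)$, then Lemma \ref{lem-isotopy-span}, applied to a path of sweep-outs of $\Sigma$ joining $f_0$ to $f$, would produce an intermediate sweep-out $f_r$ of the same genus-$n$ surface $\Sigma$ that either splits $g_0$ or spans it with both signs --- contradicting the first paragraph, which applies verbatim to $f_r$. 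Hence $\Sigma$ spans $S$ positively with respect to $(f,g_0)$ for every $f$.

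It remains to free the argument from the particular sweep-out $g_0$ of $S$. Here I would use that the bridge sphere $S$ is unique up to isotopy (Theorem \ref{maggy-10.3}): any admissible sweep-out $g$ of $S$ differs from $g_0$ by the time-one map $\phi$ of an ambient isotopy of $X_K$ carrying $S$ to $S$ and preserving its two sides, so that $g=g_0\circ\phi^{-1}$ up to an irrelevant reparametrization near the ends. Since the graphic of $(f,g_0\circ\phi^{-1})$ equals the graphic of $(f\circ\phi,g_0)$ and $\phi$ respects the orientation conventions, the spanning behaviour of $(f,g)$ agrees with that of $(f\circ\phi,g_0)$; and $f\circ\phi$ is a sweep-out of the equivalent Heegaard surface $\phi^{-1}(\Sigma)$, so after a further isotopy of sweep-outs carrying it to an honest sweep-out $f'$ of $\Sigma$ --- along which, by the exclusions of the first paragraph together with Lemma \ref{lem-isotopy-span}, the sign cannot change --- we land on the pair $(f',g_0)$, which spans positively by the second paragraph. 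Therefore $f$ spans $g$ positively for every admissible pair, and the symmetric statement holds when $K$ is above $\Sigma$.

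The step I expect to be the main obstacle is the last one: invariance of the spanning sign under change of the sweep-out for $S$. Lemma \ref{lem-isotopy}, and with it Lemma \ref{lem-isotopy-span}, is established only for sweep-outs coming from Heegaard surfaces, not for the bridge sphere, so the dependence on $g$ must be removed by the ambient-isotopy identification above rather than by a direct isotopy-of-graphics argument; making precise that any two admissible sweep-outs of $S$ are ambient-isotopic (equivalently, that the relevant space of sweep-outs is connected) is the technical heart of the argument. Everything else is a direct numerical consequence of $\chi(\Sigma)=2-2n$ and of the bounds already proved in Lemma \ref{lem-split} and Corollary \ref{cor-2n-1}.
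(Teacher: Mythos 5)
Your proposal is correct and follows essentially the same route as the paper: the paper's proof is exactly your second paragraph (two sweep-outs of $\Sigma$ spanning $g$ with opposite signs would, via Lemma \ref{lem-isotopy-span}, force an intermediate graphic that either splits or spans with both signs, which is excluded because $n<2n-1$ rules out spanning both ways by Corollary \ref{cor-2n-1} and $d(S)>2n=2-\chi(\Sigma)$ rules out splitting by Lemma \ref{lem-split}), with the base case supplied by Lemma \ref{lem-above-span-pos}. Your third paragraph, removing the dependence on the sweep-out $g$ of $S$ by an ambient-isotopy identification, addresses a point the paper's proof leaves entirely implicit (Lemma \ref{lem-isotopy-span} is stated for a fixed $g$); that added care is legitimate and does not change the substance of the argument.
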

\begin{proof}
If there are two graphics, one in which $f$ spans $g$ positively, and the other in which $f$ spans $g$ negatively, by Lemma \ref{lem-isotopy-span} there is an isotopy between them during which the graphic will either split or span with both signs. But $g(\Sigma)=n$, and $n<2n-1$ so the graphic cannot span with both signs, and $n<\frac{1}{2}d(S)$ so the graphic cannot split. 
\end{proof}
\begin{cor}
Let $K$ be an $n$-bridge knot with $n\geq 3$ and $d(K)>2n$. Let $\Sigma$ be a tubed Heegaard surface so that $K$ lies below $\Sigma$, and $\Sigma'$ a tubed Heegaard surface so that $K$ lies above $\Sigma'$. Then a common stabilization of $\Sigma$ and $\Sigma'$ has genus at least $\min\{\frac{1}{2}d(S),2n-1\}$. \label{cor-common-stab}
\end{cor}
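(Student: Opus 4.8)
The plan is to combine the spanning results of the previous corollaries with the key dichotomy lemma (Lemma~\ref{lem-span-pos-neg} via Corollary~\ref{cor-2n-1}) and the splitting estimate (Lemma~\ref{lem-split}), applied not to $\Sigma$ and $\Sigma'$ themselves but to a hypothetical common stabilization. Let $\widehat\Sigma$ be a common stabilization of $\Sigma$ and $\Sigma'$, of genus $g$. By Corollary~\ref{cor-isotopy-span}, every pair of sweep-outs realizing $\Sigma$ against $S$ (with the fixed boundary and side conventions) has $\Sigma$ spanning $S$ positively, since $K$ lies below $\Sigma$; likewise $\Sigma'$ spans $S$ negatively since $K$ lies above $\Sigma'$. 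By Lemma~\ref{lem-stab}, a stabilization preserves the sign of spanning, so $\widehat\Sigma$, viewed as a stabilization of $\Sigma$, spans $S$ positively, and viewed as a stabilization of $\Sigma'$, spans $S$ negatively. Since $\widehat\Sigma$ (up to isotopy) is a single surface, we obtain one surface $\widehat\Sigma$ which, via two different sweep-outs against $S$, spans $S$ with both signs.

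Next I would run the isotopy argument of Lemma~\ref{lem-isotopy-span} on $\widehat\Sigma$: since the two sweep-outs of $\widehat\Sigma$ are given by the same (equivalent) Heegaard surface, there is a one-parameter family $f_r$ interpolating between them, and Lemma~\ref{lem-isotopy-span} forces some $f_{r_0}$ to either split $S$ or span $S$ with both signs simultaneously. In the first case, Lemma~\ref{lem-split} gives $d(S)\leq 2-\chi(\widehat\Sigma)=2g$, hence $g\geq \tfrac12 d(S)$. In the second case, Corollary~\ref{cor-2n-1} gives $g\geq 2n-1$. Either way $g\geq \min\{\tfrac12 d(S),\,2n-1\}$, which is the claim. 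I should double-check that Lemma~\ref{lem-isotopy-span} applies: it is stated for a Heegaard surface $\Sigma$ of $X_K$ with two sweep-outs of opposite spanning sign against $g$; here $\widehat\Sigma$ is a genus-$g$ Heegaard surface of $X_K$ and we have just produced exactly such a pair of sweep-outs, so the hypotheses are met with no restriction on $g$ beyond $\chi<0$, which holds since $g\geq n\geq 3$.

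The main obstacle — really the only subtle point — is making sure the spanning signs are genuinely \emph{forced} rather than merely \emph{achievable}, so that the two incompatible spanning behaviors both attach to the \emph{same} surface $\widehat\Sigma$. This is exactly what Corollary~\ref{cor-isotopy-span} buys us: with the fixed orientation conventions on the sweep-outs ($\partial M\subset f^{-1}(-1)$ and $g^{-1}(-1)$ on the minus side of $S$), the sign of spanning is an invariant of $\Sigma$ (determined by which side $K$ lies on), not a choice. Lemma~\ref{lem-stab} then transports this to $\widehat\Sigma$ in both of its incarnations as a stabilization. One should also note that $\widehat\Sigma$ is only well-defined up to isotopy, but that is fine, since Lemma~\ref{lem-isotopy-span} is precisely a statement about isotoping between two sweep-outs of equivalent surfaces. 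Assembling these pieces:

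\begin{proof}[Proof of Corollary~\ref{cor-common-stab}]
Let $S$ be the unique minimal bridge sphere for $K$, and let $\widehat\Sigma$ be a common stabilization of $\Sigma$ and $\Sigma'$, say of genus $g$; since $\Sigma$ is a stabilization of an $n$-bridge tubed Heegaard surface, $g\geq n\geq 3$, so $\chi(\widehat\Sigma)<0$. Fix sweep-outs so that $\partial X_K\subset f^{-1}(-1)$ and $g^{-1}(-1)$ lies on the minus side of $S$. Since $K$ lies below $\Sigma$, Corollary~\ref{cor-isotopy-span} shows that any such sweep-out of $\Sigma$ spans $S$ positively; by Lemma~\ref{lem-stab} the corresponding stabilized sweep-out of $\widehat\Sigma$ also spans $S$ positively. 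Likewise, since $K$ lies above $\Sigma'$, Corollary~\ref{cor-isotopy-span} and Lemma~\ref{lem-stab} produce a sweep-out of $\widehat\Sigma$ that spans $S$ negatively.

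Thus $\widehat\Sigma$, a Heegaard surface for $X_K$, admits two sweep-outs against $g$ of opposite spanning sign. By Lemma~\ref{lem-isotopy-span} there is some $r_0$ in an interpolating family for which $f_{r_0}$ either splits $S$ or spans $S$ both positively and negatively. In the first case, Lemma~\ref{lem-split} gives $d(S)\leq 2-\chi(\widehat\Sigma)=2g$, so $g\geq\tfrac12 d(S)$. In the second case, Corollary~\ref{cor-2n-1} gives $g\geq 2n-1$. In either case
\[
g\geq\min\left\{\tfrac12 d(S),\,2n-1\right\},
\]
as claimed.
\end{proof}
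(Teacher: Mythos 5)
Your proof is correct and follows essentially the same route as the paper: stabilize the positively-spanning and negatively-spanning sweep-outs of $\Sigma$ and $\Sigma'$ (Lemma~\ref{lem-above-span-pos} / Corollary~\ref{cor-isotopy-span} plus Lemma~\ref{lem-stab}) to get two sweep-outs of the common stabilization with opposite spanning signs, then apply Lemma~\ref{lem-isotopy-span} and conclude via Lemma~\ref{lem-split} and Corollary~\ref{cor-2n-1}. You have simply spelled out the final ``the conclusion follows'' step of the paper explicitly, which is fine.
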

\begin{proof}
Let $f''_0$ be the sweep-out given by $\Sigma''$ as a stabilization of $\Sigma$, so that by Lemma \ref{lem-stab} $f''_0$ spans $g$ positively. Let $f''_1$ be the sweep-out given by $\Sigma''$ as a stabilization of $\Sigma'$, so that $f''_1$ spans $g$ negatively. By Lemma \ref{lem-isotopy-span}, at some point during the isotopy between these, the graphic either spans with both signs or splits. The conclusion follows.
\end{proof} 

\begin{cor}If $d(S)\geq 4n$, then $g(\Sigma'')\geq2n-1$.\label{cor-4n}\end{cor}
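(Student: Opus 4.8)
The plan is to derive this immediately from Corollary \ref{cor-common-stab}. That corollary already states that if $\Sigma$ is a tubed Heegaard surface with $K$ below it and $\Sigma'$ one with $K$ above it, then any common stabilization $\Sigma''$ of the two satisfies $g(\Sigma'') \geq \min\{\frac{1}{2}d(S), 2n-1\}$. So the only work left is to evaluate the minimum under the stronger hypothesis $d(S) \geq 4n$.

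First I would observe that $d(S) \geq 4n$ gives $\frac{1}{2}d(S) \geq 2n$, and since $2n > 2n-1$ we get $\min\{\frac{1}{2}d(S), 2n-1\} = 2n-1$. Substituting this into the bound from Corollary \ref{cor-common-stab} yields $g(\Sigma'') \geq 2n-1$, which is exactly the claim. One should perhaps remark that the hypotheses $n \geq 3$ and $d(S) > 2n$ needed to invoke Corollary \ref{cor-common-stab} are automatically satisfied, the latter because $d(S) \geq 4n > 2n$.

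There is essentially no obstacle here: the corollary is a specialization of the preceding one, recording the clean numerical bound $2n-1$ that holds once the distance is large enough to beat the topological bound coming from Corollary \ref{cor-2n-1}. The substantive content — that opposite-sided splittings force either a span with both signs (hence genus $\geq 2n-1$ by Corollary \ref{cor-2n-1}) or a splitting of $S$ (hence genus controlled by $\frac{1}{2}d(S)$ via Lemma \ref{lem-split}) — has already been carried out in Corollary \ref{cor-common-stab}. The remaining step is the trivial comparison of $\frac{1}{2}d(S)$ with $2n-1$.

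\begin{proof}
Since $d(S)\geq 4n>2n$ and $n\geq 3$, Corollary \ref{cor-common-stab} applies and gives $g(\Sigma'')\geq\min\{\frac{1}{2}d(S),2n-1\}$. But $d(S)\geq 4n$ implies $\frac{1}{2}d(S)\geq 2n>2n-1$, so $\min\{\frac{1}{2}d(S),2n-1\}=2n-1$. Hence $g(\Sigma'')\geq 2n-1$.
\end{proof}
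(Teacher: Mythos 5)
Your proof is correct and matches the paper's intent exactly: the paper states this corollary without proof, treating it as the immediate specialization of Corollary \ref{cor-common-stab} obtained by noting that $d(S)\geq 4n$ forces $\frac{1}{2}d(S)\geq 2n>2n-1$, so the minimum is $2n-1$. Nothing further is needed.
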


This concludes the proof of Theorem \ref{mainthm}

%\section{Distinct tubed Heegaard Splittings}
%
%In this section, we complete the proof of Theorem \ref{mainthm} by showing that, given the assumptions, the ${2n\choose n}$ surfaces are all distinct. 
%
%\begin{lem}
%For $K$ an $n$-bridge knot, let $\Sigma_0$, $\Sigma_1$ be two tubed Heegaard splittings of $X_K$ with $K$ on the same side. If $\Sigma_0$ and $\Sigma_1$ are constructed from different indices as in the proof of Lemma \ref{lem-2n-choose-n}, but are isotopic, then $d(K)\leq 2n$.
%\label{lem-S2-cross-I}\end{lem}
%\begin{proof}
%Assume there is an isotopy $\Sigma_r$ between the two surfaces, and $d(K)>2n$. Let $v_+$ and $v_-$ be the central vertices of the spines of the bridge sphere, $g^{-1}(1)$ and $g^{-1}(-1)$ respectively. We may assume that, in general, for no $r$ does the surface $\Sigma_r$ contain both of these points. Assume that for some $r$, $\Sigma_r$ does intersect one of these vertices, say $v_-$. Let $e$ be an edge adjacent to $v_-$, so that there is some $\epsilon>0$ such that $|\Sigma_{r-\epsilon}\cap e|-|\Sigma_{r+\epsilon}\cap e|=\pm1$. Therefore the parity of the intersection number is changed, so by the same arguments as in Lemma \ref{lem-isotopy-span} and Corollary \ref{cor-isotopy-span}, no such $r$ can exist. 
%
%Therefore, the isotopy $\Sigma_r$ can be performed to avoid small open neighborhoods of $v_\pm$, i.e. it can be performed in a copy of $S^2\times I-K$, where the bridge sphere from which $\Sigma_0$ and $\Sigma_1$ are obtained is isotopic to $S^2\times\{\frac{1}{2}\}$. 
%
%
%\end{proof}

\section{The tunnel systems}
By choosing a nonseparating system of $n-1$ compression disks for the compression body containing $K$, the cocores of these 2-handles can be extended to a tunnel system for $K$. In this section, we give a canonical way to make this choice

\begin{dfn}
Let $A_i$ be an annulus of a tubed Heegaard splitting which runs along $K$ and has left foot at $x_i$ and right foot at $x_{r(i)}$. If there is no annulus running along $K$, $A_j$, with length $\ell>(i-j\mod2n)$, then $A_i$ is \emph{adjacent to $K$}.
\end{dfn}

In other words, $A_i$ is adjacent to $K$ if no other annulus runs between $A_i$ and $K$. Then any annulus with left foot between $i$ and $r(i)$ also has its right foot in this range. Let all such annuli be \emph{surrounding} $A_i$.

\begin{dfn}
Let $\Sigma$ be a tubed Heegaard surface. An annulus of $\Sigma$ which is adjacent to $K$, together with all the annuli which surround this one, is called a \emph{chunk} of $\Sigma$. The \emph{size} of the chunk is the total number of annuli. The annulus which is adjacent to $K$ is the \emph{defining annulus} for the chunk.
\end{dfn}
The splittings $\Sigma_a$ and $\Sigma_b$ have $n$ chunks all of size 1, the $3$-bridge splitting with index $(1,2,3)$ has a single chunk of size 3, and the $7$-bridge splitting with index $(1,3,5,6,7,11,12)$ has four chunks of sizes $1, 1, 3$ and $2$. We now show how to associate $k-1$ tunnels to each chunk of size $k$. 

Assume without loss of generality that $K$ is below $\Sigma$. Annuli of $\Sigma$ which have their left feet at odd values of $i$ will be said to go \emph{up}, while those with even-numbered left food indices will be said to go \emph{down}. Since $K$ is below $\Sigma$, any annulus which is adjacent to $K$ must go up. Let there be a chunk of size $k$, with $A_i$ being the defining annulus. Let $\gamma_i$ be the bridge arc of $K$ between $x_i$ and $x_{i+1}$. The $k-1$ tunnels associated to this chunk, $\tau^i_1,...,\tau^i_k$ will all have $\partial\tau^i_j\subset \gamma_{i-1}$. 

Let $A_j$ be an annulus of this chunk which goes down. If $A_j$ has length 1, let $D$ be the bridge disk associated to $\gamma_j$. Let $W_1$ be the compression body which contains $K$. The component of $D\cap W_1$ which meets $S-\eta(K)$ is a non-separating compression disk for $\Sigma$ contained in $W_1$. A tunnel for $K$ which is dual to this compression disk is an arc with endpoints on $\gamma_{i-1}$ which intersects this disk in one point and can be levelled into $\Sigma$. Such an arc is shown in Figure \ref{fig-chunk-tunnels-1}.

\begin{figure}[h!]
\includegraphics[width=0.5\linewidth]{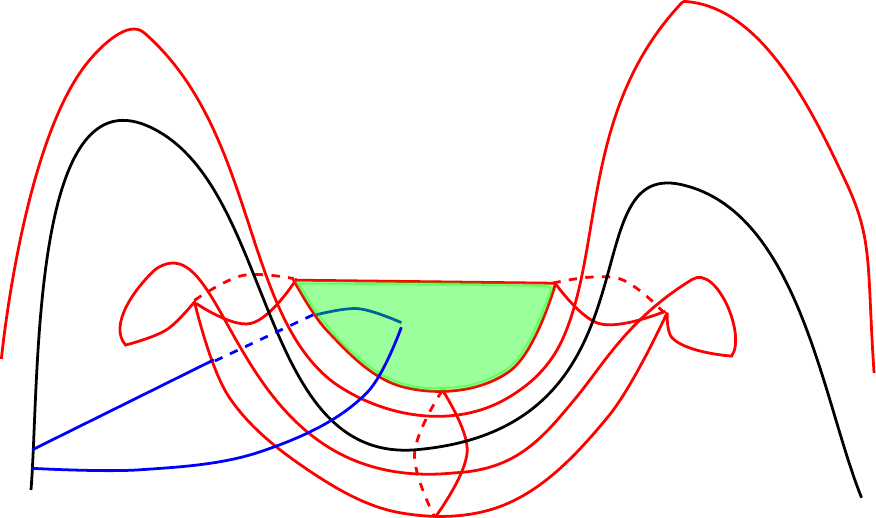}
\caption{A tunnel (blue) which is dual to a compression disk (green) which corresponds to a down-annulus of length 1 when $K$ is below $\Sigma$.}
\label{fig-chunk-tunnels-1}
\end{figure}

If $A_j$ is a down-annulus with length greater than 1, then we construct a compression disk for $\Sigma$ on the same side as $K$ by banding together the two bridge disks associated to $\gamma_j$ and $\gamma_{r(j)-1}$ with a strip which follows $A_j$, as shown in Figure \ref{fig-chunk-tunnels-2}.

\begin{figure}[h!]
\includegraphics[width=0.7\linewidth]{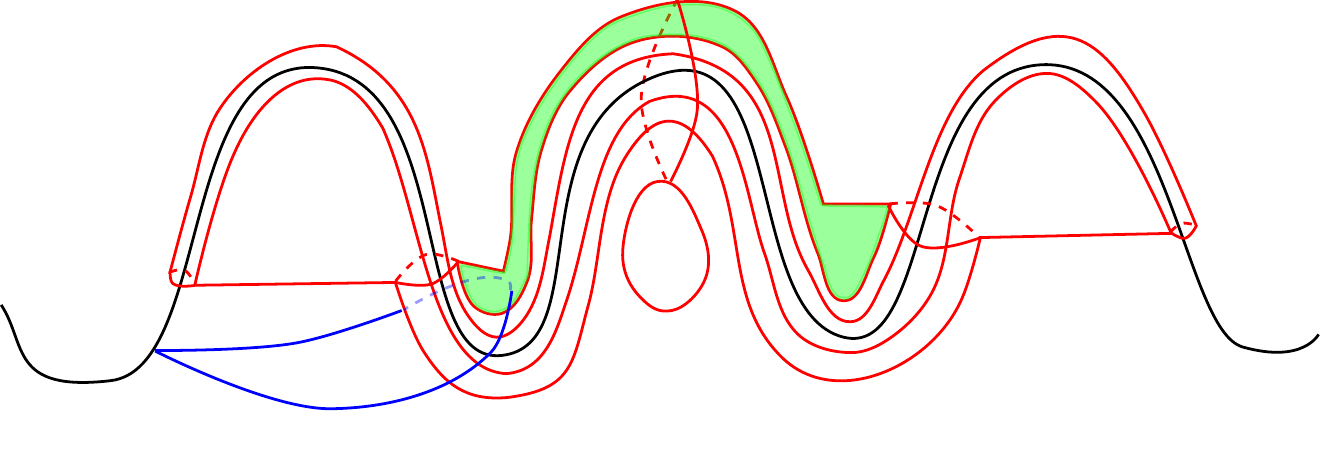}
\caption{Banding together two bridge disks along a down-annulus of length 3 to make a compression disk for $\Sigma$ whose dual is a tunnel.}
\label{fig-chunk-tunnels-2}
\end{figure}

If $A_k$ is an up-annulus which is \emph{not} the defining annulus $A_i$, we make use of a non-separating compression disk for $\Sigma$ whose boundary separates the ends of $A_k$. We choose this disk so that $\partial D\subset S$ separates the punctures $\left\{x_{r(k)},x_{r(k)+1},...,x_{r(i)-1}\right\}$ from the others. The arc which starts at $\gamma_{i-1}$, pierces this disk once, goes through $A_k$ and returns to $\gamma_{i-1}$ is parallel into $\Sigma$ (see Figure \ref{fig-chunk-tunnels-3}).

\begin{figure}[h!]
\includegraphics[width=0.7\linewidth]{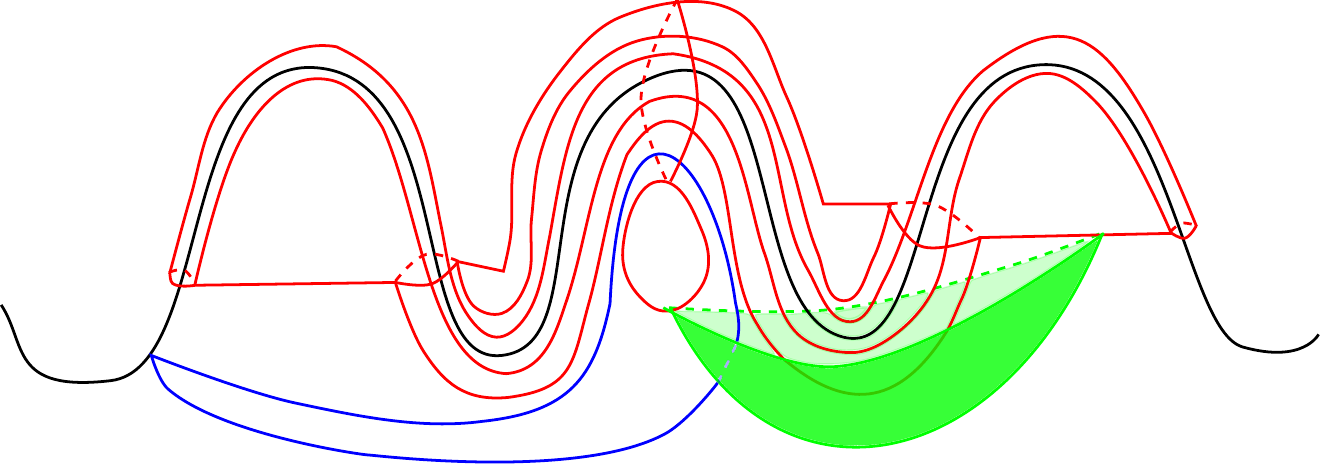}
\caption{Constructing a tunnel corresponding to an up-tube in a chunk when $K$ is below $\Sigma$.} 
\label{fig-chunk-tunnels-3}
\end{figure}

In this way, assigning tunnels dual to banded bridge disks for down-tubes, and dual to nonseparating compression disks for up-tubes, every chunk of size $k$ contributes $k-1$ tunnels, since all annuli but the defining annulus contributes exactly one tunnel. The remainder of the tunnels connect adjacent chunks: if there is a chunk defined by the annulus $A_i$, then there is a tunnel $\tau_i$ with $\partial_-\tau_i\in\gamma_{i-1}$ and $\partial_+\tau_i\in\gamma_{r(i)}$. However, if we connect all adjacent chunks together, there will be $n$ tunnels total. One of these is dual to a compression disk for $\Sigma$ in $S^3$ which intersects $K$ in one point, so we simply choose which chunk to ignore. So that this choice is consistent for all partitions of $\{1,...,2n\}$ into chunks, we omit the tunnel $\tau_i$ which connects $\gamma_{i-1}$ to $\gamma_{r(i)}$ where $i$ is the smallest index over all defining annuli for $\Sigma$. 

In constructing these tunnels, several choices were made: the decision to use the structure of the chunks is a choice of a system of compression disks. Within this choice, the structure of each chunk as being given by up- and down-tunnels is once again a choice of compression disks. Since any two complete systems of compression disks for a given handlebody are related by a sequence of handle slides, the tunnel systems corresponding to these choices are equivalent after a sequence of edge slides. The choice of endpoints for the tunnels is also trivial modulo edge slides of the tunnels. Finally, the choice of which redundant chunk-connector tunnel is to be omitted is arbitrary, but also corresponds to a choice of system of compressing disks. Thus, any minimal tunnel system for $K$ is equivalent to a minimal tunnel system for $K$ constructed in this way, after a sequence of isotopies and edge slides. 

Figure \ref{fig-3-bridge-tunnels} shows how we construct the $\dfrac{1}{2}\displaystyle{6\choose3}=10$ two-tunnel tunnel systems of a $3$-bridge knot which correspond to a Heegaard surface which is above $K$. The horizontal line represents a bridge sphere, and the braids are omitted for simplicity. The other ten tunnel systems are symmetric to the given ones, on the other side of the bridge sphere. 
\begin{figure}[h!]
\subfigure[I=(1,3,5)]{\includegraphics[width=0.18\textwidth]{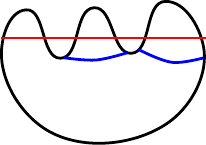}}
\subfigure[I=(1,3,4)]{\includegraphics[width=0.18\textwidth]{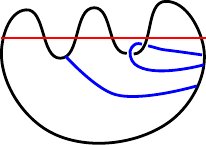}}
\subfigure[I=(1,2,5)]{\includegraphics[width=0.18\textwidth]{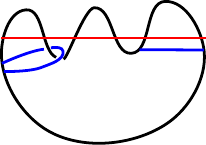}}
\subfigure[I=(3,5,6)]{\includegraphics[width=0.18\textwidth]{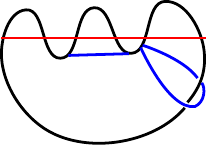}}
\subfigure[I=(1,2,4)]{\includegraphics[width=0.18\textwidth]{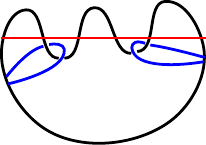}}
\\
\subfigure[I=(3,4,6)]{\includegraphics[width=0.18\textwidth]{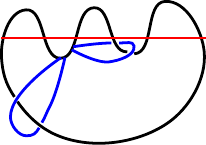}}
\subfigure[I=(2,5,6)]{\includegraphics[width=0.18\textwidth]{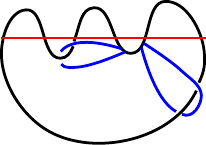}}
\subfigure[I=(1,5,6)]{\includegraphics[width=0.18\textwidth]{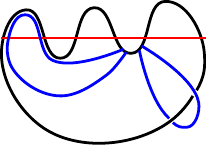}}
\subfigure[I=(1,2,3)]{\includegraphics[width=0.18\textwidth]{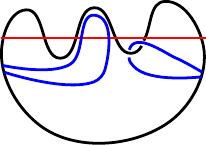}}
\subfigure[I=(3,4,5)]{\includegraphics[width=0.18\textwidth]{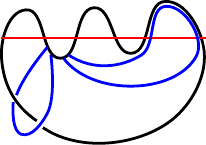}}
\caption{10 of the 20 tunnels of a 3-bridge knot.}
\label{fig-3-bridge-tunnels}
\end{figure}
\bibliographystyle{amsplain}
\bibliography{heegaard-high-distance-knot}
\end{document}